\documentclass[a4paper,10pt]{amsart}

\usepackage{amssymb,amsthm,amsmath,verbatim}
\usepackage{t1enc}

\usepackage{enumerate}
\usepackage{multicol}

\usepackage{color}
\usepackage{tikz}
\usepackage{breqn}
\usepackage{xmpmulti}
\usepackage{psfrag}

 \usepackage[usenames,dvipsnames]{pstricks}
 \usepackage{epsfig}
\usepackage{pst-grad}
\usepackage{pstricks,pst-node}
\usepackage{float}

\usepackage{wrapfig}
\usepackage{framed}

\usepackage[left=4 cm, right=4cm]{geometry}
\geometry{ textheight=20 cm}

\numberwithin{equation}{section}

\newenvironment{tproof}{
  
  \begin{proof}
}{\end{proof}}

\newenvironment{cproof}{
  
  \begin{proof}
}{\end{proof}}

\newtheorem{prop}{Proposition}[section]

\newtheorem{lemma}[prop]{Lemma}

\newtheorem{cor}[prop]{Corollary}

\newtheorem{thm}[prop]{Theorem}

\newtheorem{clm}[prop]{Claim}

\newtheorem{obs}[prop]{Observation}
\newtheorem{subclaim}{Subclaim}[prop]

\newtheorem{quest}[prop]{Question}

\newcommand{\mc}[1]{\mathcal{#1}}
\newcommand{\mb}[1]{\mathbb{#1}}

\newcommand{\oo}{\omega}
\newcommand{\uhr}{\upharpoonright}
\newcommand{\omg}{{\omega_1}}

\usepackage{todonotes}
\newcommand{\piii}{\Pi^1_1}
\newcommand{\siii}{\Sigma^1_1}
\newcommand{\diii}{\Delta^1_1}

\DeclareMathOperator{\atree}{AT}
\DeclareMathOperator{\stree}{ST}
\DeclareMathOperator{\otree}{TO}
\DeclareMathOperator{\ktree}{KT}
\DeclareMathOperator{\satree}{sAT}
\DeclareMathOperator{\Borel}{Borel}

  \DeclareMathOperator{\stat}{Stat}

\DeclareMathOperator{\htt}{ht}

\def\<{\left\langle}
\def\>{\right\rangle}
\def\br#1;#2;{\bigl[ {#1} \bigr]^ {#2} }

\newcommand{\setm}{\setminus}

\newcommand{\subs}{\subset}

\newcommand{\ran}{\operatorname{ran}}

\newcommand{\smf}{\hspace{0.008 cm}^\smallfrown}

\title[]{On the complexity of classes of uncountable structures: trees on $\aleph_1$}

\date{\today}

  \author{Sy-David Friedman}
  \address[Sy-D. Friedman]{Universit\"at Wien,
Kurt G\"odel Research Center for Mathematical Logic, Wien, Austria}
 \email{sdf@logic.univie.ac.at}
  

  \author{D\'aniel T. Soukup}
  \address[D.T. Soukup]{Universit\"at Wien,
Kurt G\"odel Research Center for Mathematical Logic, Wien, Austria}
 \email[Corresponding author]{daniel.soukup@univie.ac.at}

 \urladdr{http://www.logic.univie.ac.at/$\sim  $soukupd73/}

\makeatletter
\newtheorem*{rep@theorem}{\rep@title}
\newcommand{\newreptheorem}[2]{%
\newenvironment{rep#1}[1]{%
 \def\rep@title{#2 \ref{##1}}%
 \begin{rep@theorem}}%
 {\end{rep@theorem}}}
\makeatother

\newreptheorem{corollary}{Corollary}
\newreptheorem{theorem}{Theorem}

\subjclass[2010]{}
\keywords{}

\begin{document}
 \begin{abstract}  
We analyse the complexity of the class of (special) Aronszajn, Suslin and Kurepa trees in the projective hierarchy of the higher Baire-space $\omg^\omg$. First, we will show that none of these classes have the Baire property (unless they are empty). Moreover, under $(V=L)$, (a) the class of Aronszajn and Suslin trees is $\piii$-complete, (b) the class of special Aronszajn trees is $\siii$-complete, and (c) the class of Kurepa trees is $\Pi^1_2$-complete.  We achieve these results by finding nicely definable reductions that map subsets $X$ of $\omg$ to trees $T_X$ so that $T_X$ is in a given tree-class $\mc T$ if and only if $X$ is stationary/non-stationary (depending on the class $\mc T$). Finally, we present models of CH where these classes have lower projective complexity.
  
 \end{abstract}

 \maketitle

 \section{Introduction}
 
We set out to investigate the complexity of certain well-studied classes of $\aleph_1$-trees on $\omega_1$. In particular, under various set-theoretic assumptions, we determine the Borel/projective complexity of the class of  Aronszajn, Suslin and Kurepa trees as a subset of the higher Baire space $\omg^\omg$. In several cases, we prove the existence of nicely definable reductions between  these tree classes and the stationary relation on $\mc P(\omega_1)$. As the latter is $\Pi^1_1$-complete under $V=L$, we get the parallel completeness of the tree classes. In the case of Kurepa-trees, we use a different coding argument.

 \medskip
 
The general setting of our paper is \textit{the higher Baire space}\footnote{The name generalized Baire space is also commonly used.} on $\omg^\omg$ and $2^\omg$. Basic open sets correspond to countable partial functions which, in turn, give rise to a $\omg$-Borel structure on $\omg^\omg$, $2^\omg$ and so $\mc P(\omg)$ as well. This allows us to measure the complexity of subsets of $\omg^\omg$ or, equivalently, of families of natural combinatorial structures on $\omg$. In this paper, we will focus on models of CH i.e., $2^{\aleph_0}=\aleph_1$. This is a fairly natural assumption in this higher Baire setting which, in particular, ensures that $\omg^\omg$ has a basis of size $\aleph_1$. This, of course, is analogous to the standard Baire space $\oo^\oo$ having a countable basis.
 
 Our primary interest lies in the set of \emph{$\aleph_1$-trees}: partial orders $T=(\omg,<_T)$ on $\omega_1$ so that (1) the set of predecessors of each node is well-ordered,\footnote{This allows us to define a height function on $T$ and the levels $T_\xi$ of $T$} and (2) for any ordinal $\xi$, the set of nodes $T_\xi$ of height $\xi$ is countable and non-empty if $\xi<\omg$ and empty otherwise.
 
 
 Following \cite{vaananentrees}, we will use $\otree$ to denote the class of trees without uncountable branches; so we allow trees with uncountable levels here. A tree $T$ in $\otree$ is called \emph{Aronszajn} if $T$ is also an $\aleph_1$-tree (i.e., all levels are countable). We call $T$ a  \emph{Suslin tree} if it is an Aronszajn tree without uncountable antichains. On the other hand, an $\aleph_1$-tree $T$ is \emph{Kurepa} if it has at least $\aleph_2$ uncountable branches. We will denote the classes of these trees with $\atree$, $\stree$ and $\ktree$, respectively. An Aronszajn tree is \emph{special} if it is the union of countably many antichains.  The latter collection will be denoted by $\satree$. These are the main classes of trees we will be analyzing in detail. Let us refer the reader to the classical set theory textbooks \cite{kunen, jech2003set}  and to \cite{stevotrees} for a nice introduction to trees of height $\aleph_1$; the latter survey emphasizes the connection of trees to topology and linear orders. 
  
 \medskip
 
 \begin{figure}[H]
     \centering
     
\psscalebox{1.0 1.0} 
{
\begin{pspicture}(0,-2.739834)(12.1,2.739834)
\psframe[linecolor=black, linewidth=0.04, dimen=outer](7.6,1.260166)(0.0,-1.939834)
\rput[bl](0.0,1.460166){\Large{$\atree$: Aronszajn trees}}
\psframe[linecolor=black, linewidth=0.04, linestyle=dashed, dash=0.17638889cm 0.10583334cm, dimen=outer](11.2,1.260166)(8.8,-1.939834)
\rput[bl](9.2,0.260166){\Large{$\ktree$:}}
\rput[bl](9.5,-0.43983397){\Large{Kurepa}}
\rput[bl](9.8,-0.93983397){\Large{trees}}
\psellipse[linecolor=black, linewidth=0.04, dimen=outer](2.0,-0.33983397)(1.6,1.2)
\psellipse[linecolor=black, linewidth=0.04, linestyle=dashed, dash=0.17638889cm 0.10583334cm, dimen=outer](5.6,-0.33983397)(1.6,1.2)
\rput[bl](0.8,-0.23074308){\Large{$\satree$: special}}
\rput[bl](1.2,-0.83074308){\Large{Aronszajn}}

\rput[bl](4.628571,-0.13983397){\Large{$\stree$: Suslin}}
\rput[bl](5.48571,-0.73983397){\Large{trees}}
\psframe[linecolor=black, linewidth=0.04, dimen=outer](8.4,2.2)(-0.5,-2.739834)
\rput[bl](0,2.460166){\Large{$\otree$: trees with no uncountable branch}}
\end{pspicture}
}

     \caption{Classes of trees of height $\omg$}
     \label{fig:trees}
 \end{figure}
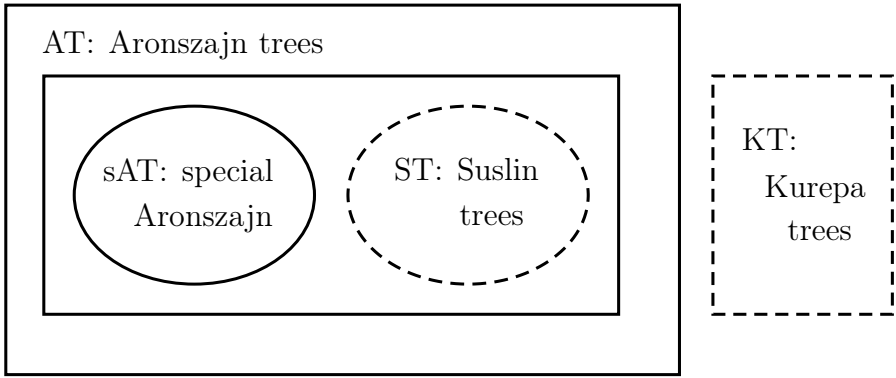
 
 \medskip
 
 Recall that special Aronszajn trees exist in ZFC, however, even assuming the Continuum Hypothesis, $\stree$ and $\ktree$ may be empty. In fact, as proved by R. Jensen, CH is consistent with $\atree=\satree$ \cite{devlin2006souslin} and so $\stree = \emptyset$ in this model.\footnote{We mention that $\atree\neq \satree$ does not imply the existence of Suslin-trees \cite{shelah2017proper}.} The consistency of no Kurepa trees was prove by Silver \cite{silver1971independence}. 
 
 On the other hand, under $V=L$ (or just assuming strong enough diamonds), both Suslin and Kurepa-trees exist \cite{jech2003set}. 
 
 \medskip
 
Let us present some results that will place our paper in the context of past research. Trees have played a significant role in the study of both the standard and higher Baire space \cite{kechris1987descriptive, vaananentrees, vaananen1995games, mirna, mekler1993canary}. Recall that the set of trees on $\oo$ without infinite branches is complete co-analytic. Analogously, a classical result from the theory of higher descriptive set theory is the following theorem of J. V\"a\"an\"anen.

  
  \begin{thm} \cite{vaananentrees} CH implies that $\otree$, the set of all trees on $\omg$ without uncountable branches, is $\piii$-complete.
  \end{thm}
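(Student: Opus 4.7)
The proof splits into showing $\otree \in \piii$ (easy) and establishing $\piii$-hardness (the substantive part). For the upper bound, identifying a tree $T$ on $\omg$ with its edge-relation in $\omg^\omg$, the existence of an uncountable chain in $T$ is expressible as
\[ \exists f \in \omg^\omg \, \forall \alpha < \beta < \omg \, \bigl( f(\alpha) <_T f(\beta) \bigr), \]
which is a $\siii$-condition since the matrix is an $\omg$-Borel statement about $(T,f)$. Any uncountable chain in $T$ is well-ordered by $<_T$ of order type exactly $\omg$, so this really captures ``$T$ has an uncountable branch''; negating yields $\otree \in \piii$.

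For hardness, the plan is to exhibit a Borel reduction from an arbitrary $\piii$ set $A \subseteq 2^{\omg}$ into $\otree$. The main ingredient is the higher analogue of the Lusin--Sierpi\'nski tree representation: every such $A$ has the form $A = \{x : S_x \text{ has no $\omg$-branch}\}$ for some Borel map $x \mapsto S_x \subseteq \omg^{<\omg}$. The construction is the usual one: writing $\omg^\omg \setminus A$ as a projection $\{x : \exists y \in \omg^\omg, \, (x,y) \in U\}$ for an open $U$ obtained after a Borel change of coordinates, one lets $S_x$ consist of those countable $t \in \omg^{<\omg}$ for which no basic clopen $[s]\times[t']\subseteq U$ with $s\subseteq x$ satisfies $t'\subseteq t$. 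Then $S_x$ is a subtree of $\omg^{<\omg}$, a branch through $S_x$ codes a $y$ with $(x,y)\notin U$, and so $S_x$ has no $\omg$-branch iff $x \in A$.

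The second step invokes CH to convert $S_x$ into an honest tree on the underlying set $\omg$ required by $\otree$. Since $|\omg^{<\omg}| = \omg$ under CH, fix a bijection $\pi : \omg^{<\omg} \to \omg$ and declare $\pi(s) <_{T_x} \pi(t)$ iff $s \subsetneq t$ and $s, t \in S_x$; ordinals outside $\pi(S_x)$ may be adjoined as isolated roots of trivial single-node branches so that $T_x$ lives on all of $\omg$ with well-ordered predecessor sets. Then $T_x \in \otree$ iff $S_x$ has no $\omg$-branch iff $x \in A$, and $x \mapsto T_x$ is Borel by inspection of the construction (each value $T_x(\alpha,\beta)$ depends on countable information about $x$). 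The principal obstacle, and the only deep use of CH, is the tree representation of the second paragraph; everything else is bookkeeping, modulo checking that one can actually assume $U$ is open rather than $\omg$-Borel --- which, as in the classical case, is handled by passing to a closed set in a higher-dimensional product and absorbing the extra coordinate into the quantifier.
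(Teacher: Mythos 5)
The paper itself does not prove this theorem (it is quoted from V\"a\"an\"anen), so I judge your argument on its own terms; it does follow the standard route --- the tree representation of $\siii$ sets plus a CH-coding of $\omg^{<\omg}$ onto $\omg$ --- and the upper bound and the overall plan are fine, but two points need repair. First, your normal form is mis-stated: you cannot write $\omg^\omg\setm A$ as $\{x:\exists y\,(x,y)\in U\}$ with $U$ open, since a projection of an open set is open. What you need, and what your computation with $S_x$ actually uses, is the paper's definition of $\piii$: $A=\{x:\forall y\,(x,y)\in U\}$ with $U$ open, so that $\omg^\omg\setm A$ is the projection of the closed set $(\omg^\omg\times\omg^\omg)\setm U$ and an uncountable branch of $S_x$ codes a witness $y$ with $(x,y)\notin U$. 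Read this way, ``$S_x$ has no branch of length $\omg$ iff $x\in A$'' is correct, and no step about replacing a Borel matrix by an open one is needed at all. Also, CH is not used in the tree representation itself; its only role is the coding $|\omg^{<\omg}|=\aleph_1$, so your remark about where CH enters is misplaced (though harmless).

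Second, and more substantively, the reduction as you define it is not continuous: whether $t\in S_x$ quantifies over \emph{all} countable $s\subseteq x$, so a single output coordinate depends on unboundedly much of $x$, and ``Borel by inspection'' is all you get. The paper's definition of completeness (and V\"a\"an\"anen's theorem) requires a \emph{continuous} reduction, so as written you prove only Borel-hardness. The standard fix is to pass to same-length approximations: since $U$ is open, $(x,y)\in U$ iff $[x\uhr\alpha]\times[y\uhr\alpha]\subseteq U$ for some $\alpha<\omg$, so you may set $S_x=\{t\in\omg^{<\omg}: [x\uhr \dom(t')]\times[t']\not\subseteq U \text{ for all } t'\subseteq t\}$. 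Then membership of $t$ in $S_x$ depends only on $x\uhr\dom(t)$, the branch characterization is unchanged, and after the CH-coding of $\omg^{<\omg}$ onto $\omg$ (plus your padding by isolated nodes) each coordinate of $T_x$ is determined by a countable initial segment of $x$, i.e.\ the reduction is continuous. With these corrections the proof goes through.
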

  
    We mention that $\otree$ is $\siii$ if $MA_{\aleph_1}$ holds; indeed,  $MA_{\aleph_1}$ implies that $\otree$ is exactly the set of special trees on $\omg$ \cite{baumgartner1970embedding} which is easily verified as a $\siii$ definition.

  Yet another subclass of $\otree$ is the following: a \emph{canary tree} $T$ is a tree of size continuum with no uncountable branches with the property that in any extension $W$ of the universe $V$ with $\mb R^V=\mb R^W$, if a stationary set of $V$ is no longer stationary in $W$ then $T$ has an uncountable branch in $W$. Now, canary trees give a simple definition for a subset of $\omg$ to be stationary.

   \begin{thm} \cite{mekler1993canary,hyttinen2001canary} There is a Canary-tree iff $\stat\subset \omg^\omg$, the set of all stationary subsets of $\omg$, is $\siii$. Moreover, the existence of Canary trees is independent of GCH.
  \end{thm}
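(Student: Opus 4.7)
The plan is to establish the biconditional by two independent constructions, then invoke the cited papers for the independence of GCH.

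For the forward direction, assume a canary tree $T$ exists; under CH, $|T|=\aleph_1$. The goal is to define a $\siii$ predicate characterizing stationarity. The idea is to quantify, in $V$, over an $\omg$-sized object $X\in\omg^\omg$ that codes a ``virtual generic'' for a forcing $P_S$ aiming to render $S$ non-stationary in a same-reals extension, together with the uncountable branch through $T$ guaranteed by the canary property. When $S$ is stationary, the canary property furnishes such an $X$ in $V$ as a suitable sequence of conditions producing both the virtual club and the branch; if $S$ is already non-stationary in $V$, any such $X$ would force a branch of $T$ already in $V$, contradicting that $T$ has no uncountable branches in $V$. Since $|T|,|P_S|\le\aleph_1$ under CH, the object $X$ fits as a single element of $\omg^\omg$, so the resulting formula is $\siii$.

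For the reverse direction, assume $\stat$ is $\siii$, say $S\in\stat\iff\exists y\,\psi(S,y)$ with $\psi$ an $\omg$-Borel relation. The key point is that $\omg$-Borel formulas are not in general absolute between $V$ and same-reals extensions $W$ (basic opens are absolute, but $\omg$-length unions used to build Borel sets are not), and this failure of absoluteness is exactly what allows stationary sets of $V$ to become non-stationary in $W$ without contradicting the $\siii$ representation. From the $\siii$ formula one extracts a tree $\widetilde T$ of countable approximations whose $\omg$-branches correspond to witness pairs $(S, y)$. The canary tree $T$ is then obtained from $\widetilde T$ by an $\aleph_1$-sized pruning that diagonalizes away the branches already realized in $V$, so that $T$ has no $V$-branches. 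The canary property then holds because any same-reals extension $W$ in which a $V$-stationary $S$ becomes non-stationary must deactivate the corresponding $V$-witness inside the $\omg$-Borel approximations of $\psi$, forcing a genuinely new branch of $T$ to appear in $W$.

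The principal obstacle in both directions is arranging the precise correspondence between stationarity-change and branch-appearance. In the forward direction the natural forcing $P_S$ is not $\sigma$-distributive when $S$ is stationary, so coding a virtual generic as a $\siii$ object demands passing to a suitable $\sigma$-closed dense suborder, as in the Mekler--Shelah analysis. In the reverse direction, the trimming step must remove all $V$-branches of $\widetilde T$ without destroying the future branches produced by non-absolute Borel failures, which requires careful bookkeeping. For the independence of GCH, one cites \cite{mekler1993canary} for a diamond-like construction of a canary tree compatible with GCH, and \cite{hyttinen2001canary} for a GCH model with no canary tree, obtained via an iterated forcing destroying all candidate trees while preserving GCH.
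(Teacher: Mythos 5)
This theorem is quoted in the paper from Mekler--Shelah and Hyttinen--Rautila without proof, so your sketch can only be measured against the known arguments, and both of your directions have genuine gaps. In the forward direction, the witness you propose --- an object $X\in\omg^\omg$ of $V$ coding a ``virtual generic'' for $P_S$ that produces the club killing $S$ together with the branch of $T$ --- cannot exist when $S$ is stationary: anything in $V$ that yields the club would literally be a club disjoint from $S$ in $V$, and anything yielding the branch would contradict that $T$ is branchless in $V$. So the $\siii$ condition you describe would define non-stationarity (which is trivially $\siii$), not stationarity. The correct witness is of a different nature: for bistationary $S$ one forces with $P_S$, the closed bounded subsets of $\omg\setminus S$ under end-extension, which is $\omega$-distributive exactly because $\omg\setminus S$ is stationary (your remark that $P_S$ is ``not $\sigma$-distributive when $S$ is stationary'' is inverted --- it fails to be $\sigma$-closed, and there is no reason it has a $\sigma$-closed dense suborder; Mekler--Shelah use distributivity, not such a suborder). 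From a $P_S$-name for the new branch of $T$ one extracts, in $V$, a strictly order-preserving map from $T(S)=P_S$ into $T$ (or into its tree of countable increasing sequences), and the $\siii$ definition is ``$S$ contains a club, or such a map exists''; correctness uses that an order-preserving map into a branchless tree makes $T(S)$ branchless, i.e.\ no club avoids $S$. You also need the first disjunct because when $\omg\setminus S$ is non-stationary $P_S$ collapses $\omg$ and the canary property gives nothing.

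In the reverse direction, ``prune away the $V$-branches by an $\aleph_1$-sized diagonalization'' is not a viable operation: the tree of countable approximations to witness pairs $(S,y)$ has up to $2^{\aleph_1}$ branches in $V$ (one for each stationary $S$ and each witness $y$), and no small pruning removes them all, let alone while guaranteeing that new branches appear in $W$ --- a point you never argue. The standard construction needs no pruning: take the tree of countable approximations $(S\uhr\alpha,\,y\uhr\alpha,\,C\cap\alpha)$ to triples in which $y$ is a $\siii$-witness that $S$ is stationary and $C$ is a club disjoint from $S$. This tree has no uncountable branch in $V$ outright, since a branch would exhibit an $S$ that is simultaneously stationary (via $y$) and non-stationary (via $C$) in $V$; and if in a same-reals extension $W$ some $V$-stationary $S$ becomes non-stationary, then the $V$-witness $y$ combined with the $W$-club $C$ yields a branch all of whose countable initial segments lie in the $V$-tree, because $\mb R^V=\mb R^W$ implies $\omg$ is preserved and bounded countable subsets of $\omg$ are the same. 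So the mechanism is upward absoluteness of ``is a node of this $V$-tree,'' not the ``failure of $\omg$-Borel absoluteness'' you invoke. Citing the two papers for the independence from GCH is fine, as the statement itself does exactly that.
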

 
If $V=L$ then there are no canary trees and in fact, the following polar opposite result holds which appears implicitly in \cite{fokina2013classes}.

\begin{thm} \cite{fokina2013classes}\label{thm:fok} If $V=L$ then $\stat$ is $\piii$-complete.
\end{thm}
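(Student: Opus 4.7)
The plan is as follows. First, I would dispatch the easy containment: identifying $S \subseteq \omg$ with its characteristic function in $2^\omg$, the statement ``$S$ is stationary'' becomes ``for every $C \in \omg^\omg$, if $C$ enumerates a closed unbounded subset of $\omg$ then $\ran(C) \cap S \neq \emptyset$'', which is a $\piii$ formula since being a club is $\omg$-Borel. The substantive task is $\piii$-hardness under $V=L$.

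For hardness, it suffices, given a $\piii$ set $A \subseteq \omg^\omg$, to exhibit an $\omg$-Borel reduction $f \colon \omg^\omg \to 2^\omg$ with $x \in A$ if and only if $f(x) \in \stat$. V\"a\"an\"anen's theorem quoted above says that $\otree$ is $\piii$-complete under CH, and hence (since $V=L$ implies CH) every $\piii$ set $\omg$-Borel-reduces to $\otree$. It therefore suffices to produce an $\omg$-Borel reduction $T \mapsto S_T$ from $\otree$ to $\stat$.

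The proposed construction uses the canonical $\Sigma_1$ well-ordering $<_L$: for a countable limit ordinal $\alpha$ declare $\alpha \in S_T$ precisely when $T \uhp \alpha \in L_\alpha$ but no $b \in L_\alpha$ is a branch of length $\alpha$ through $T \uhp \alpha$. If $T$ has an uncountable branch $b$, reflection inside $L$ yields a club of $\alpha$ with $b \uhp \alpha \in L_\alpha$; on this club $\alpha \notin S_T$, so $S_T$ is non-stationary. Conversely, if $T \in \otree$, consider countable elementary submodels $M \prec L_\kappa$ with $T \in M$ for $\kappa$ sufficiently large; the Mostowski collapse of such an $M$ has the form $L_\alpha$ with $\alpha = M \cap \omg$, and by absoluteness of the collapse $L_\alpha$ correctly sees the image of $T$ as a tree equal to $T \uhp \alpha$ with no branch of length $\alpha$, so $\alpha \in S_T$. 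Since the set of such $\alpha$ is stationary, $S_T \in \stat$.

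The main obstacle is delivering both the requisite definability and the combinatorial correctness simultaneously. Definability rests on $V=L$ making $<_L$ a $\Delta_1$ (hence $\omg$-Borel) well-ordering of the hereditarily countable sets, so that $T \mapsto S_T$ is itself $\omg$-Borel. The combinatorial side is subtler: verifying that the Mostowski-collapse step actually places stationarily many $\alpha$ into $S_T$ in the branch-free case requires care with how trees on $\omg$ are coded, so that the collapsed image of $T$ is genuinely $T \uhp \alpha$. It is precisely here that $V=L$ is essential, since under weaker hypotheses one can destroy stationary sets without adding uncountable branches to the tree $T$, making any naive reduction fail.
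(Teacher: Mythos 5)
You should first note that the paper does not prove Theorem \ref{thm:fok} at all: it quotes it from \cite{fokina2013classes} and uses it as a black box, so your attempt can only be measured against the known argument. Your upper bound is fine, and reducing from $\otree$ via V\"a\"an\"anen's theorem is a legitimate strategy (modulo the small point that the paper's completeness notion asks for continuous reductions, which your level-by-level map would in fact give once it is correct). The genuine gap is in the reduction $T\mapsto S_T$ itself. As written, the clause ``$T\uhp\alpha\in L_\alpha$'' is vacuous: for a limit ordinal $\alpha$ one has $L_\alpha=\bigcup_{\gamma<\alpha}L_\gamma$, each $L_\gamma$ is transitive with $L_\gamma\cap\mathrm{Ord}=\gamma$, so every element of $L_\alpha$ is a set all of whose ordinals are bounded below $\alpha$; but under any reasonable coding, $T\uhp\alpha$ (and likewise $b\uhp\alpha$ for an uncountable branch $b$) is, for club many $\alpha$, an unbounded subset of $\alpha$ or of $\alpha\times\alpha$, hence is \emph{never} an element of $L_\alpha$. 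Thus $S_T$ is non-stationary for every $T$, the map is not a reduction, and the claim that ``reflection inside $L$ yields a club of $\alpha$ with $b\uhp\alpha\in L_\alpha$'' is false. Note also that your own verification of the branchless direction only places the collapsed image of $T$ in $L_{\bar\gamma}$ where $L_{\bar\gamma}$ is the transitive collapse of $M\prec L_\kappa$ and $\bar\gamma>\alpha$; so it does not verify $\alpha\in S_T$ under your definition --- the two halves of your sketch do not even match each other.

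The obvious repair --- replacing $L_\alpha$ by $L_{\beta(\alpha)}$ with $\beta(\alpha)$ the least level at which $\alpha$ becomes countable, which is the level that does make ``$b\cap\alpha$ appears on a club'' true (this is the classical condensation fact behind Kurepa trees and $\diamondsuit^+$ in $L$) --- breaks the other direction: once $\alpha$ is countable, $T\uhp\alpha$ is a countable pruned tree of limit height, so it has cofinal branches and $L$ constructs them quickly, and a branchless $T$ can then still yield a non-stationary $S_T$. If instead one only consults levels that contain $T\uhp\alpha$ and still believe $\alpha=\omg$, then the branch direction needs the restriction of a global branch to condense into such a level on a club, while the branchless direction needs that no spurious local branch does; arranging both simultaneously requires a canonical local structure attached to $T\uhp\alpha$ (in practice a relativized level of the form $L_\gamma[T\uhp\alpha]$ chosen by a careful minimality/theory condition), and making that choice work is precisely the content of the argument in \cite{fokina2013classes}. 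There is also a coding issue you wave at but do not resolve: $\otree$ admits trees with uncountable levels, so ``branch of length $\alpha$ through $T\uhp\alpha$'' must be aligned with tree height on a club before any of the above makes sense. So the ingredients you name (condensation, countable hulls of $L_\kappa$, a local branch test) are the right ones, but the key step --- the correct choice of local model validating both implications --- is missing, and with the choice actually made the reduction fails outright.
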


 We will use this theorem to show that certain classes of trees are complete in their complexity class. Finally, let us mention that there are strong connections between infinitary logic, trees and the complexity questions that our paper is concerned with \cite{vaananen1995games, shelah2000stationary, fokina2013classes}. We only  included the results most relevant for our studies but we would like to refer the reader to the survey \cite{vaananen1995games} and the book \cite{friedman2014generalized} for more details.
 
 \medskip
 
First, we will start by showing that non of the classes $\atree,\satree,\stree$ and $\ktree$ have the Baire property and hence they are non Borel (unless $\stree$ and $\ktree$ are empty, in which case they are trivially Borel). Moreover, we will prove the following results about the complexity of these classes:
 \medskip

\begin{figure}[H]
    \centering
    \bgroup
\def\arraystretch{1.5}
  \begin{tabular}{|c|c|c|c|c|}
  \hline
     & $\atree$ & $\satree$ & $\stree$ & $\ktree$ \\
\hline
ZFC & $\piii\setm \Borel$ & $\siii\setm \Borel$ & $\piii\setm \Borel$ or $\emptyset$ & $\Pi^1_2\setm \Borel$ or $\emptyset$ \\
     \hline
     $V=L$ & $\piii$-complete &  $\siii$-complete & $\piii$-complete  & $\Pi^1_2$-complete \\
     \hline
     Abraham-Shelah model &$\diii\setm \Borel$ & $\diii\setm \Borel$ & $\diii\setm \Borel$ &
     ?\\
     \hline
     $MA_{\aleph_1}$ or $PFA(S)[S]$ & \multicolumn{2}{|c|}{$\diii\setm \Borel$} & $\emptyset$  &
     ?\\
     \hline
\end{tabular}
\egroup
    \caption{A summary of complexity}
    \label{fig:table}
\end{figure}

Some of these results are easy consequences of known theorems (such as the results regarding the Abraham-Shelah model which we will describe shortly). However, the completeness of the classes under $V=L$ requires significant work and new ideas. We present the ZFC results and facts about the Abraham-Shelah model in Section \ref{sec:ZFC}. Then, in Section \ref{sec:reduce}, we will show that stationarity can be reduced (in a Borel way) to the classes $\atree$ and $\stree$. The results on $V=L$ will follow easily then. Finally, we deal with Kurepa-trees in Section  \ref{sec:kurepa}. Note that under $MA_{\aleph_1}$ and $PFA(S)[S]$, $\atree=\satree$ so the last line of Figure \ref{fig:table} follows by wrapping out the definitions.\footnote{In fact, in the latter model, all Aronszajn trees are club-isomorphic \cite{yorioka2017club}.} We end our paper with some remarks and open problems in Section \ref{sec:questions}.


 \subsection{Preliminaries} 
 
 
 
 We defined the tree classes already but let us review the most important descriptive set theoretic notions that we need. The family of \textit{Borel sets} in $\omg^\omg$ is the smallest family containing all open sets which is closed under taking complements and unions/intersections of size $\aleph_1$. It is easy to see that the set of $\aleph_1$-trees forms a Borel set  with an appropriate coding of the order into a subset of $\omg$. 
 
 Now, a subset $\mc T$ of  $\omg^\omg$ is $\piii$ (and called \textit{co-analytic}) if there is an open $B\subset \omg^\omg \times \omg^\omg$ so that $T\in \mc T$ if and only for all $g\in \omg^\omg$, $(T,g)\in B$. Complements of $\piii$ sets, denoted by $\siii$, are called \textit{analytic} sets. In Section \ref{sec:ZFC}, the reader can see elementary applications of this definition.
 
 Finally, a subset $\mc T$ of  $\omg^\omg$ is \textit{complete} for a complexity class $\Gamma$ iff $\mc T\in \Gamma$ and for any $\mc S\in \Gamma$, there is a continuous $\pi:\omg^\omg \to \omg^\omg$ so that $T\in \mc S$ if and only if $\pi(T)\in \mc T$. That is, no matter how we pick $\mc S$ in $\Gamma$, we can completely decide $\mc S$ by our single fixed set $\mc T$ and using an appropriate continuous map. 
 In Section \ref{sec:reduce}, we shall see this definition at work.
 
 \medskip
 
 Let us also recall some classical guessing principles:  $\diamondsuit^+$ asserts the existence of a sequence $\underline A=\{\mc A_\alpha:\alpha<\omg\}$ of countable sets so that for any $X\subset \omg$, there is a club $C\subs \omg$ so that $C\cap \alpha,X\cap \alpha\in \mc A_\alpha$ for any $\alpha\in C$. In this situation, we say that $\underline A$ witnesses $\diamondsuit^+$.
 
We say that $\underline N=(N_\alpha)_{\alpha<\omg}$ is a \emph{$\diamondsuit^+$-oracle over $P$} if  
\begin{enumerate}
    \item $\underline N$ is an increasing sequence of countable elementary submodels of $H(\aleph_2)$,
    \item $P,(N_\alpha)_{\alpha<\beta}\in N_\beta$ for all $\beta<\omg$, and
    \item $\underline N$ witnesses $\diamondsuit^+$.
\end{enumerate}

Clearly, if $\diamondsuit^+$ holds then for any $P\in H(\aleph_2)$, there is a $\diamondsuit^+$-oracle over $P$. Also, recall that $\diamondsuit^+$ implies that $\stree$ and $\ktree$ are non-empty \cite{jech2003set}.

\medskip

For later reference, we state a few consistency results, the first being a now classical theorem of R. Jensen.

\begin{thm}\cite{devlin2006souslin}\label{thm:jensen} Consistently, CH holds and all Aronszajn-trees are special.
\end{thm}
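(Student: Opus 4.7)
The plan is to start with $V=L$ and perform a countable support iteration $\langle P_\alpha, \dot{Q}_\alpha : \alpha < \omega_2 \rangle$ of length $\omega_2$, where at each stage $\alpha$ the factor $\dot{Q}_\alpha$ is a $P_\alpha$-name for a $\sigma$-closed poset that generically adjoins a specializing function $f : \dot{T}_\alpha \to \omega$ (i.e.\ injective on every chain) for some Aronszajn tree $\dot{T}_\alpha$ appearing in $V^{P_\alpha}$. The $\sigma$-closed specialization forcing has, roughly, countable partial functions $f : D \to \omega$ as conditions, where $D \subseteq T$ is a bounded downward-closed set and $f$ is injective on chains, ordered by reverse extension; some care is needed in the exact definition so that the generic function is actually total on $T$, e.g.\ by building in a countable ``promise'' that reserves infinitely many values along each cofinal branch of $D$, thereby guaranteeing enough room to extend to any given node. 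A generic for such a $\dot{Q}_\alpha$ then produces a decomposition of $\dot{T}_\alpha$ into countably many antichains, i.e.\ witnesses that $\dot{T}_\alpha$ is special.

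A standard bookkeeping argument using GCH---every $P_{\omega_2}$-name for a subset of $\omega_1$ is essentially a $P_\beta$-name for some $\beta<\omega_2$, and there are altogether at most $\aleph_2$ such names, so an enumeration of length $\omega_2$ suffices---guarantees that every Aronszajn tree of $V^{P_{\omega_2}}$ is picked up as some $\dot{T}_\alpha$. Since having a specializing function $T \to \omega$ is upward absolute between $\omega_1$-preserving extensions, once $T$ is specialized at stage $\alpha$ it remains special through the rest of the iteration, so $V^{P_{\omega_2}}$ satisfies ``every Aronszajn tree is special''.

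The main obstacle is preserving CH along the iteration of length $\omega_2$, which amounts to showing the iteration adds no reals. This follows once one verifies that the countable support iteration of $\sigma$-closed forcings is itself $\sigma$-closed: at a limit $\lambda$, a decreasing $\omega$-sequence of conditions has countable joint support, and on each coordinate $\xi$ the stage-wise $\sigma$-closure of $\dot{Q}_\xi$ provides a lower bound, which may be consistently assembled into a single lower bound by transfinite recursion along the coordinates in $\supp$. Consequently $P_{\omega_2}$ is $\sigma$-closed, preserves $\omega_1$, and adds no reals to $V=L$, so $2^{\aleph_0}=\aleph_1$ continues to hold in $V^{P_{\omega_2}}$. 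Combined with the bookkeeping, this delivers a model of CH in which every Aronszajn tree on $\omega_1$ is special.
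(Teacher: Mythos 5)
There is a genuine gap, and it is located exactly where the proposal is vaguest: the claim that the specializing forcing (countable conditions plus ``promises'') is $\sigma$-closed, so that the whole countable support iteration is $\sigma$-closed and adds no reals. No such $\sigma$-closed poset exists in general. Indeed, suppose $T$ is a Suslin tree of some intermediate model $V$ of your iteration (Suslin trees exist in $L$, your ground model, and the bookkeeping must eventually hand you trees that are Suslin at their stage), and suppose $Q$ were $\sigma$-closed with $\Vdash_Q$ ``$\dot f:T\to\omega$ is injective on chains''. Force with the product $T\times Q$ over $V$, obtaining a branch $b$ and a generic $G$. Since $T$ is ccc and, as a forcing, $\omega$-distributive, $V[b]$ has the same $\omega_1$ and adds no new $\omega$-sequences from $V$; hence $Q$ is still $\sigma$-closed in $V[b]$ and $\omega_1$ is preserved in $V[b][G]=V[G][b]$. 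But then $b$ is an uncountable chain of $T$ in $V[G][b]$ on which the function $f\in V[G]$ is injective into $\omega$ --- a contradiction. So the building block of your iteration cannot be $\sigma$-closed, and the easy preservation argument (``CS iteration of $\sigma$-closed forcings is $\sigma$-closed, hence no new reals'') is unavailable. Concretely, the promises are what break $\sigma$-closure: along a decreasing $\omega$-sequence of conditions the promises accumulate, and a lower bound keeping all of them need not exist; without promises, the naive countable-condition forcing fails the density argument (below a node at a limit level the already-assigned values may exhaust $\omega$), and for a Suslin tree it misbehaves badly rather than specializing.

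This is precisely why the theorem is hard and why the paper only cites it: Jensen's original proof is an elaborate $\diamondsuit$-guided ccc iteration (Devlin--Johnsbr\r{a}ten), and Shelah's proof replaces your ``$\sigma$-closed'' by the genuinely weaker properties that the specializing-with-promises forcing actually has --- properness, $\alpha$-properness, and $\mathbb{D}$-completeness with respect to a simple $2$-completeness system --- together with his preservation theorem that countable support iterations of such forcings add no reals. The bookkeeping part of your argument and the upward absoluteness of specialness are fine, but the ``no new reals'' step is the real content of the theorem and cannot be obtained by $\sigma$-closure.
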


Jensen's argument was built on an elaborate ccc forcing (in fact, a completely new iteration technique).  A more mainstream proof of this theorem is due to S. Shelah \cite{shelah2017proper} using countable support iteration of proper posets.

\medskip

Given two trees $S,T$, a \emph{club-embedding of $T$ into $S$} is an order preserving injection $f$ defined on $T\uhr C= \bigcup\{T_\alpha:\alpha\in C\}$, where $C\subs \omg$ is a club (closed and unbounded subset), with range in $S$. A \textit{derived tree of $S$} is a level product of the form $\Pi_{i<n}S\cap s_i^{\uparrow}$ where the  $s_i$ are distinct nodes from the same level of $S$.\footnote{Here, $S\cap s_i^{\uparrow}=\{t\in S: t\geq s_i\}$.} A \textit{fully Suslin tree} is a Suslin tree with the property that all its derived trees are Suslin as well.

We will refer to the model in the next theorem as the \emph{Abraham-Shelah model}.

\begin{thm}\cite{abrahamiso} Consistently, CH holds and there is a fully Suslin tree $R$ and special Aronszajn tree $U$ so that, for any Aronszajn tree $T$, either
\begin{enumerate}
    \item $T$ club-embeds into $U$ or
    \item  there is a derived tree of $R$ that club-embeds into $T$.
\end{enumerate} Moreover, there are only $\aleph_1$-many Suslin-trees modulo club-isomorphism.\footnote{It is an intriguing open problem if one can find \textit{a model with a single Suslin-tree} (modulo club-isomorphism).}
\end{thm}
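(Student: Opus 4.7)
The plan is to carry out a countable-support iteration of proper forcings of length $\omega_2$ over a ground model $V_0$ of CH in which a fully Suslin tree $R$ has been fixed; $\diamondsuit^+$ (or $V=L$) supplies such an $R$, since one can arrange a coherent construction where every derived tree is Suslin. Parallel to the iteration, I would build a single special Aronszajn tree $U$ to serve as a \emph{universal target} for club-embeddings from Aronszajn trees that omit derived copies of $R$.

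First I would set up a bookkeeping that anticipates all $P_{\omega_2}$-names for Aronszajn trees (using CH in $V_0$ and $\aleph_2$-cc of the iteration). At stage $\alpha$ with a listed name $\dot T_\alpha$, the iterand $\dot{\mathbb Q}_\alpha$ is of one of two types. Either $\dot T_\alpha$ is forced already to club-contain a derived tree of $R$, in which case clause (2) of the dichotomy is witnessed in the ground model data and $\dot{\mathbb Q}_\alpha$ is trivial; or $\dot T_\alpha$ is forced to avoid every derived copy of $R$ on a club, and $\dot{\mathbb Q}_\alpha$ is a ``stretch-and-specialize'' poset $\mathbb Q(\dot T_\alpha, U)$ that generically produces a club-embedding $\dot T_\alpha \to U$ while extending $U$ by finite conditions. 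At limit stages, countable-support limits automatically thread the partial pieces of $U$ together into a single special Aronszajn tree.

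The technical backbone splits into three steps. Step (i): an amalgamation lemma for $U$ asserting that any finite partial embedding of $\dot T_\alpha$ into $U$ can be extended provided $\dot T_\alpha$ omits derived copies of $R$; this is what makes $U$ universal for class (1). Step (ii): a preservation lemma showing that each $\mathbb Q(\dot T_\alpha, U)$ preserves ``$R$ fully Suslin'', done by an elementary-submodel/fusion argument that exploits the hypothesis on $\dot T_\alpha$ to block any $R$-like combinatorics from reappearing inside the condition space. Step (iii): an iteration theorem lifting (ii) to countable-support limits, i.e., a ``preservation of full Suslinity'' property in the style of Shelah's preservation framework that is $\omega_2$-iterable and preserves CH, $\omega_1$, and properness. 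For the moreover clause, note that any Suslin tree $T$ cannot club-embed into the special $U$, so the dichotomy forces clause (2), and since $R$ has only $\aleph_1$ derived trees (up to club-isomorphism), a rigidity argument pinning the club-isomorphism type of $T$ to that of its embedded derived subtree yields only $\aleph_1$ isomorphism classes.

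The hard part is step (ii) together with its iterability in (iii): the forcing $\mathbb Q(\dot T_\alpha, U)$ must add a club-embedding without adding an uncountable antichain or a cofinal branch to \emph{any} derived tree of $R$, and this must be engineered in a form stable under countable-support iteration. This is precisely where the hypothesis ``$\dot T_\alpha$ omits derived copies of $R$'' does essential work, and where the bulk of the Abraham--Shelah machinery lives; everything else (bookkeeping, construction of $U$ at limits, CH preservation, the Suslin-counting corollary) is then routine relative to that preservation theorem.
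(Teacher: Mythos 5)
This statement is not proved in the paper at all: it is quoted as a black box from Abraham--Shelah (\cite{abrahamiso}), so there is no in-paper argument to compare yours against. Judged on its own terms, your text is a strategy outline rather than a proof. The entire mathematical content of the theorem sits inside your Steps (i)--(iii) --- that the ``stretch-and-specialize'' poset $\mathbb Q(\dot T_\alpha,U)$ is proper, adds a club-embedding of $\dot T_\alpha$ into $U$, preserves the full Suslinity of $R$ \emph{given only} that $\dot T_\alpha$ omits club-embedded derived trees of $R$, and that this preservation property survives countable-support limits of length $\omega_2$ while keeping CH. You state these as lemmas and explicitly defer them to ``the Abraham--Shelah machinery''; but these are exactly the theorem, so nothing has been proved. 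There are also concrete points where the sketch as written would not work: a club-embedding of an Aronszajn tree cannot be added by finite conditions if you want properness and a genuine club of levels in the domain --- the conditions must be countable approximations (a partial embedding defined on $T\uhr (C\cap\alpha)$ for a countable closed $C$), and it is precisely at limit stages of such conditions that the hypothesis on $\dot T_\alpha$ must be invoked; ``finite conditions plus countable support'' does not produce the required object. Likewise $U$ cannot simply be ``threaded together at limits'' by the iteration; one needs it fixed (or constructed with a uniformity that makes the amalgamation lemma of your Step (i) true), and that lemma is again nontrivial.

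The ``moreover'' clause also does not follow from the dichotomy by the one-line argument you give. From ``$T$ is Suslin, hence non-special, hence some derived tree $D$ of $R$ club-embeds into $T$'' you only get an embedded copy of $D$ in $T$; to conclude that there are only $\aleph_1$-many Suslin trees up to club-isomorphism you need that every Suslin tree in the model is actually club-isomorphic to a derived tree of $R$, and a club-embedding of $D$ into $T$ gives no control over the rest of $T$ without further rigidity input. In Abraham--Shelah this is extracted from the specific way the iteration treats trees containing derived copies of $R$, not from the dichotomy alone; your ``rigidity argument pinning the club-isomorphism type'' names the missing step but does not supply it. In short: the skeleton is broadly the right shape (and matches the known construction), but the preservation theorem, the universality of $U$, and the counting of Suslin trees --- i.e.\ all of the substance --- are missing.
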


In essence, the above theorem says that any Aronszajn tree is either special or embeds a Suslin tree closely associated to $R$. 

\medskip

Finally, the fact that there might be no Kurepa trees was proved by J. Silver in 1971.

\begin{thm}\cite{silver1971independence} If a strongly inaccessible cardinal is L\'evy collapsed to $\omega_2$ then in the resulting model, there are no Kurepa trees. 
\end{thm}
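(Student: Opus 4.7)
My plan is to follow Silver's classical argument, which hinges on pushing the purported Kurepa tree and its branches into a single bounded intermediate extension and then cardinal-collapsing the bound into $\aleph_1$. Assume $V \models \mathrm{GCH}$ (otherwise force it first by an absorbable preliminary step). Let $\mathbb{P} = \col(\omg, <\kappa)$ with $\kappa$ strongly inaccessible, let $G$ be $\mathbb{P}$-generic over $V$, and suppose towards a contradiction that $T \in V[G]$ is Kurepa with branch set $B$, so that $|B|^{V[G]} \geq \aleph_2^{V[G]} = \kappa$.

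The first step is a pointwise localization of $T$ and its branches. Using the $\kappa$-cc of $\mathbb{P}$ (which comes from the inaccessibility of $\kappa$) together with the countable support of L\'evy conditions, a nice $\mathbb{P}$-name for $T$, coded as a subset of $\omg \times \omg$, involves $\aleph_1$ antichains of size $<\kappa$; since $\cf(\kappa) > \aleph_1$, the total support is bounded by some $\alpha < \kappa$ and hence $T \in V[G_\alpha]$. Applied branch by branch, the same argument gives, for each $b \in B$, some $\beta_b < \kappa$ with $b \in V[G_{\beta_b}]$.

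The heart of the argument is to upgrade this pointwise statement to a uniform one: find a single $\alpha^\star < \kappa$ with $B \subseteq V[G_{\alpha^\star}]$. I would factor $\mathbb{P} \cong \col(\omg, <\alpha) * \dot{\mathbb{Q}}$, where $\mathbb{Q}$ is $\omg$-closed and $\kappa$-cc in $V[G_\alpha]$, and combine the $\omg$-closure (which allows antichains deciding branches to be consolidated along countable chains of conditions) with the $\kappa$-cc (which forbids an antichain of size $\kappa$ appearing in the tail forcing) and the inaccessibility of $\kappa$ (so that $V_\kappa \models \mathrm{ZFC}$ and one has room to reflect) in a closing-off argument at some $\alpha^\star < \kappa$ of cofinality $\omg$. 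This is the genuine obstacle: the naive sum $|\bigcup_\beta (B \cap V[G_\beta])|$ only yields $|B| \leq \kappa$, which is compatible with Kurepaness, so one must really exploit the interaction between $\omg$-closure and $\kappa$-cc of the tail to rule out the $\{\beta_b : b \in B\}$ being unbounded in $\kappa$.

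Once $B \subseteq V[G_{\alpha^\star}]$ is established the finish is immediate: by \textsc{gch} in $V$ and the $|\alpha^\star|^+$-cc of $\col(\omg, <\alpha^\star)$ we have $|\mathcal{P}(T) \cap V[G_{\alpha^\star}]|^{V[G_{\alpha^\star}]} \leq (|\alpha^\star|^+)^V < \kappa$, and in the further extension $V[G]$ this $V$-cardinal has been collapsed to $\aleph_1^{V[G]}$. Therefore $|B|^{V[G]} \leq \aleph_1^{V[G]} < \aleph_2^{V[G]} = \kappa$, contradicting $T$ being Kurepa and finishing the proof.
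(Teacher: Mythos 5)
Your outline gets the two easy steps right (capturing $T$ in some $V[G_\alpha]$ via the $\kappa$-c.c.\ and bounded supports, and the final cardinality count, where incidentally GCH is not needed: $\kappa$ being a strong limit already gives $(2^{\aleph_1})^{V[G_\alpha]}<\kappa$), but the step you yourself label ``the genuine obstacle'' is exactly the point where the proof lives, and your proposal does not contain an argument for it. The missing idea is the branch-preservation lemma: \emph{a countably closed forcing cannot add a new cofinal branch to a tree of height $\omega_1$ all of whose levels are countable}. One proves it by building conditions $p_s$, $s\in 2^{<\omega}$, such that $p_{s^\frown 0}$ and $p_{s^\frown 1}$ decide incompatible nodes of the purported new branch, taking lower bounds along the $\omega$-many branches of $2^{<\omega}$ (this is where countable closure enters) and arranging that all resulting conditions decide the branch at one common level $\delta<\omega_1$; they then force pairwise distinct nodes in $T_\delta$, contradicting the countability of that level. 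Writing $\mathrm{Col}(\omega_1,<\kappa)\cong\mathrm{Col}(\omega_1,<\alpha)\times\mathrm{Col}(\omega_1,[\alpha,\kappa))$ and noting that the tail factor stays countably closed in $V[G_\alpha]$ (because the first factor adds no new $\omega$-sequences), this lemma shows that \emph{every} cofinal branch of $T$ in $V[G]$ already lies in $V[G_\alpha]$ for the very same $\alpha$ that captures $T$.

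Consequently the structure you propose for the middle step is off: there is no need for a separate $\alpha^\star$, for a closing-off or reflection argument at a point of cofinality $\omega$, nor for the $\kappa$-c.c.\ of the tail forcing, which plays no role in branch preservation (it is used only to capture $T$ and to see that $\kappa$ becomes $\omega_2$). Your instinct that the pointwise bounds $\beta_b$ must be shown bounded is correct, and your observation that the naive union argument only gives $|B|\le\kappa$ is a fair diagnosis of why something more is needed; but ``consolidating antichains along countable chains'' plus ``no antichain of size $\kappa$ in the tail'' is not an argument that the tail adds no branches, and as stated the proof cannot be completed from it. With the branch lemma inserted, the rest of your write-up (the counting in $V[G_\alpha]$ and the collapse of that count to $\aleph_1$ in $V[G]$) goes through as you describe.
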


 
 \subsection{Acknowledgments}
The authors would like to thank the Austrian Science Fund (FWF) for the generous support through Grant I1921. The second author was also supported by  NKFIH OTKA-113047.

 \section{Aronszajn and Suslin trees}\label{sec:ZFC}

  To avoid some technicalities, let us restrict our attention to certain \emph{regular trees} only from now on: those trees $T$ which are rooted, every node in $T$ has at least two immediate successors and $T$ is pruned i.e., for any $s\in T$ of height $\alpha$ and any $\beta<\omg$ above $\alpha$, there is some $t\in T$ of height $\beta$ that extends $s$. These are simple Borel conditions and we assume that our classes $\atree$, $\stree$ and later $\ktree$ consist of only regular trees.
  
  

  
  
  \medskip
  
  Now, let us start the complexity analysis of these classes. Our first observation follows from the definitions immediately.
  
 \begin{obs}\label{obs:basic}\begin{enumerate}
     \item The set of all $\aleph_1$-trees on $\omg$ is Borel.
     \item  $\atree$ and $\stree$ are both $\piii$ sets.
     \item $\satree$ is $\siii$.
 \end{enumerate}
 \end{obs}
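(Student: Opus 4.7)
The plan is to unpack each clause directly from the definitions, identifying an $\aleph_1$-tree on $\omg$ with its order relation, coded as an element of $2^{\omg\times\omg}$ (equivalently of $\omg^\omg$ via a fixed bijection $\omg\times\omg\to\omg$).

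For (1), I would check that each defining property of a (regular) $\aleph_1$-tree corresponds to a Borel condition. Transitivity of $<_T$, antisymmetry, and ``the set of predecessors of each node is well-ordered'' are all $\omg$-indexed conjunctions of conditions each living on a countable initial segment $T\uhp\alpha$. The requirement that $T_\alpha$ is countable and nonempty for every $\alpha<\omg$ (and empty otherwise) is likewise determined by countable pieces of information. The regularity clauses (rooted, at least two immediate successors at each node, pruned) are Borel for the same reason. Intersecting over $\alpha<\omg$ keeps us inside the Borel hierarchy.

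For (2), $T\in\atree$ iff $T$ is an $\aleph_1$-tree and for every $g\in\omg^\omg$, $g$ does not code an uncountable branch through $T$; that is, there is some $\alpha<\omg$ with $g(\alpha)\notin T_\alpha$ or at which $g$ fails to be $<_T$-increasing. Since ``$g$ codes a cofinal branch through $T$'' is Borel in the pair $(T,g)$, universally quantifying over $g$ yields a $\piii$ condition. For $\stree$, add the further clause that for every $h\in 2^\omg$, $h$ is not the characteristic function of an uncountable antichain (again Borel in $(T,h)$). Pairing $(g,h)$ into a single parameter gives one outer universal quantifier over $\omg^\omg$, so $\stree$ is $\piii$ as well.

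For (3), I would use the key observation that if $f:T\to\oo$ satisfies $f(s)=f(t)\Rightarrow s$ and $t$ are $<_T$-incomparable, then $f$ is injective on every $<_T$-chain, so every chain of $T$ is countable; in particular $T$ automatically has no uncountable branch. Thus $T\in\satree$ iff $T$ is an $\aleph_1$-tree and $\exists f\in\oo^\omg$ with the above (Borel) property. A single existential over $\omg^\omg$ then gives the $\siii$ description.

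The only point requiring any care is that ``Borel'' here means $\omg$-Borel and that the quantifiers of $\piii$ and $\siii$ range over $\omg^\omg$; once a uniform coding is fixed so that ``$g$ is a branch through $T$'', ``$h$ is an antichain in $T$'', and ``$f$ specializes $T$'' are Borel relations in all their variables, each item is immediate, and there is no genuine obstacle to overcome.
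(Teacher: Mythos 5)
Your proposal is correct and follows essentially the same route as the paper, which likewise notes that being a (regular) $\aleph_1$-tree is a Borel condition and writes $\atree$ as the co-projection, over $g\in\omg^\omg$, of the relation ``$g$ does not code an uncountable branch of $T$'' (the paper checks this relation is relatively open, while you just note it is Borel, which suffices equally well), leaving $\stree$ and $\satree$ to the reader. Your key point for (3) --- that a map $f:T\to\oo$ whose fibers are antichains automatically forbids uncountable branches, so a single existential quantifier over $\omg^\omg$ gives the $\siii$ description without any hidden $\piii$ clause --- is exactly the intended standard argument.
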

 \begin{cproof}
 The proof is a fairly standard exercise in descriptive set theory. To demonstrate the definitions, we prove that $\atree$ is $\piii$ and leave the rest to the interested reader. We need to find an open $B\subset \omg^\omg \times \omg^\omg$ so that $T\in \atree$ if and only for all $g\in \omg^\omg$, $(T,g)\in B$. Indeed, let $B$ denote the set of pairs $(T,g)$ so that $T$ is an  $\aleph_1$-tree on $\omg$ and $g\in \omg^\omg$ does not code an uncountable branch in $T$. Now, $B$ is open in the product of codes for $\aleph_1$-trees (a Borel set) and $\omg^\omg$. Indeed, if $g$ does not code an uncountable branch then either $g$ codes a countable branch in $T$ (i.e., there is a level of $T$ without any element of $g$) or $g$ codes two incomparable elements. Both cases can be witnessed by fixing a countable initial segment of $T$ and $g$ and hence, $B$ is open. Now, an $\aleph_1$-tree $T$ is Aronszajn if and only if for any $g$, $(T,g)\in B$.
 \end{cproof}
 
 So in models where $\atree=\satree$ e.g., in Jensen's model of CH from Theorem \ref{thm:jensen}, we get the following.
 
 \begin{cor}
Consistently, CH holds and $\atree=\satree\in \diii$ and so $\stree=\emptyset$.\footnote{We remark here that $MA_{\aleph_1}$ implies $\atree=\satree$ so they are both $\diii$ however CH fails.}
 \end{cor}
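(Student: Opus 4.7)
The plan is to invoke Jensen's consistency result (Theorem \ref{thm:jensen}) to find a model of CH in which every Aronszajn tree is special, and then combine this with Observation \ref{obs:basic} to read off the complexity and emptiness conclusions.

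First, I would work inside Jensen's model, where CH holds and $\atree = \satree$. Since Observation \ref{obs:basic} already tells us that $\atree$ is $\piii$ and $\satree$ is $\siii$ (both statements being provable in ZFC), the identity $\atree = \satree$ in this model immediately yields that this common class is simultaneously $\piii$ and $\siii$, hence $\diii$, as required.

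Next, I would verify $\stree = \emptyset$ in the same model. The key observation is that a special Aronszajn tree \emph{cannot} be Suslin: if $T = \bigcup_{n<\omega} A_n$ with each $A_n$ an antichain, then since $|T| = \aleph_1$, by the pigeonhole principle some $A_n$ is uncountable, violating the Suslin condition. Therefore $\stree \cap \satree = \emptyset$, and combined with $\stree \subseteq \atree = \satree$ we conclude $\stree = \emptyset$.

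There is no substantive obstacle here; the entire argument reduces to combining the previously stated facts. The one subtlety worth flagging explicitly is that the $\diii$ bound on $\atree = \satree$ is only a \emph{model-theoretic} statement (it holds in Jensen's model) and does not contradict the later result that consistently $\atree$ is $\piii$-complete, since the two statements refer to different models of ZFC+CH.
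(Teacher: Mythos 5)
Your proposal is correct and follows exactly the route the paper intends: pass to Jensen's model (Theorem \ref{thm:jensen}), combine $\atree=\satree$ with the $\piii$ and $\siii$ bounds of Observation \ref{obs:basic} to get $\diii$, and note that a special Aronszajn tree has an uncountable antichain by pigeonhole, so $\stree=\emptyset$. The paper leaves these details implicit, but your argument matches its one-line derivation.
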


 Our next goal is to show that none of the classes $\atree,\satree$ and $\stree$ are Borel, that is, unless $\stree=\emptyset$ in which case it is trivially Borel. We will apply the following well-known fact.
 
 \begin{lemma}\label{lm:ext}
 Suppose that $T,T'$ are countable, rooted, binary branching, and pruned trees of height $\beta<\omega_1$. Then $T$ and $T'$ are isomorphic. In fact, any isomorphism $f:T_{\leq \alpha}\to T'_{\leq \alpha}$ with $\alpha<\beta$ extends to an isomorphism $\bar f:T\to T'$.
 \end{lemma}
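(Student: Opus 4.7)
The plan is a back-and-forth argument exploiting the countability of the trees. Since $T$ and $T'$ are countable, enumerate $T\setminus T_{\leq\alpha}=\{s_n:n<\oo\}$ and $T'\setminus T'_{\leq\alpha}=\{s_n':n<\oo\}$. The aim is to construct a nested chain $f=f_0\subs f_1\subs\cdots$ of partial isomorphisms $f_n:D_n\to D_n'$ between downward-closed subtrees $D_n\supseteq T_{\leq\alpha}$ and $D_n'\supseteq T'_{\leq\alpha}$, arranging at stage $2n+1$ that $s_n\in D_{2n+1}$ (forth direction) and at stage $2n+2$ that $s_n'\in D_{2n+2}'$ (back direction). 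The union $\bar f=\bigcup_n f_n$ then provides the desired extension of $f$ to all of $T$.

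The crucial step is the extension: given $f_n$ and a target $x\in T\setminus D_n$ of height $\gamma$, extend so that $x$ joins the domain. The strategy is to first \emph{pre-select} a candidate image $x'\in T'_\gamma$ by applying prunedness of $T'$ to lift the image of $\{y\leq x\}\cap D_n$ up to level $\gamma$; then define the extension on $\{y\leq x\}\setminus D_n$ by matching, level by level, with the predecessors of $x'$ above the current image top. At each successor level, binary branching gives a $2$-to-$2$ pairing; at each limit level $\lambda\leq\gamma$, the predecessor of $x'$ at level $\lambda$ supplies the needed realized node in $T'_\lambda$, and we map the predecessor of $x$ at that level onto it.

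The main obstacle is precisely this limit-level realization, especially when $\{y\leq x\}\cap D_n$ is cofinal at some limit $\alpha_0<\gamma$ without a top: there is no a priori guarantee that the image chain has a node of $T'_{\alpha_0}$ above it. The key is that $x'$ is chosen \emph{before} filling in the chain, so its predecessor at level $\alpha_0$ automatically realizes the current image chain. To keep this available throughout the back-and-forth, we maintain as an invariant that whenever a cofinal chain of $D_n$ at some limit level is realized by a $T$-node in $D_n$, then its image is realized by a $T'$-node in $D_n'$; this is preserved by the pre-selection of target images at each extension step. Since $T$ and $T'$ are countable, only countably many such realizations need be arranged, and $\bar f$ is obtained in the end as a genuine isomorphism extending $f$.
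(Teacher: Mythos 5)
Your overall architecture -- a back-and-forth over enumerations of $T\setminus T_{\leq\alpha}$ and $T'\setminus T'_{\leq\alpha}$, with domains of the form ``$T_{\leq\alpha}$ plus finitely many predecessor chains'' and images produced by prunedness -- is exactly the ``easy back-and-forth'' the paper has in mind (it omits the proof entirely). But your central extension step has a genuine gap: you choose $x'\in T'_{\gamma}$ merely as \emph{some} node above the image of $\{y\leq x\}\cap D_n$ supplied by prunedness, and nothing prevents $x'$, or its predecessors above the current image top, from already lying in $D_n'$, or from lying above $f_n(w)$ for some $w\in D_n$ that is incomparable with $x$. In either case the extended map is no longer injective, or no longer reflects the order, and the union $\bar f$ fails to be an isomorphism. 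What must be imposed (and verified) is that $x'$ diverges from the range exactly where $x$ diverges from the domain, i.e.\ $x'\wedge f_n(w)=f_n(x\wedge w)$ for every $w\in D_n$. This is where binary branching actually does its work: since $D_n$ is $T_{\leq\alpha}$ together with finitely many chains, $C=\{y\leq x\}\cap D_n$ has a \emph{maximum} $c$; by maximality of $c$, every element of $D_n$ strictly above $c$ lies above the single immediate successor of $c$ that is not below $x$, hence at most one immediate successor of $f_n(c)$ belongs to $D_n'$; so one must pick $x'$ at level $\gamma$ above an immediate successor of $f_n(c)$ \emph{outside} $D_n'$ (binary branching provides it, prunedness lifts it to level $\gamma$). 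With this clause the level-by-level matching along the chain is indeed injective, order-preserving and order-reflecting, and your scheme goes through.

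By contrast, the difficulty you single out as the main obstacle is not one in your setting. Because each $D_n$ is $T_{\leq\alpha}$ plus finitely many chains $\{y\leq t_i\}$, the set $\{y\leq x\}\cap D_n$ consists of the predecessors of $x$ up to level $\max(\alpha,\max_i\htt(x\wedge t_i))$ and therefore always has a top; the scenario of a cofinal, topless chain of $D_n$ at a limit level below the new target never occurs. Moreover, the invariant you propose -- that a cofinal chain of $D_n$ realized by a node \emph{in} $D_n$ has its image realized by a node in $D_n'$ -- is automatic for any order-preserving $f_n$ (the image of the realizing node realizes the image chain) and so carries no content; the version that would matter, realization by nodes of $T$ not yet in the domain, is neither formulated nor needed here. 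So the portion of your argument meant to carry the difficulty is aimed at a non-problem, while the real issue -- steering the pre-selected $x'$ away from the already-constructed range so that injectivity and order-reflection survive -- is left unaddressed.
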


The proof is an easy back-and-forth argument that we omit.
 This allows us to prove a new, relatively straightforward result.
 
\begin{lemma}\label{lm:at1}
Suppose that $T$ is a regular $\aleph_1$-tree and $\mc U$ is somewhere co-meager in the set of all regular trees on $\omega_1$. Then 

\begin{enumerate}
    \item there is an isomorphic copy $S$ of $T$ in $\mc U$, and 
    
\item $\mc U$ contains a tree with an uncountable branch. 
\end{enumerate}
\end{lemma}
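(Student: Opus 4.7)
The plan is a standard Banach--Mazur style fusion. Since $\mc U$ is somewhere comeager, I would first fix a basic open set $U_0$ in which $\mc U$ is comeager and write $U_0 \setm \mc U = \bigcup_{\alpha<\omg}F_\alpha$ with each $F_\alpha$ nowhere dense in the space of regular trees on $\omg$. Basic open sets are determined by countable partial conditions, so $U_0$ corresponds to a countable partial tree $p_0$; after a harmless extension I may assume $p_0$ is itself (isomorphic to) a regular tree of some countable height $\beta_0$ on a countable subset of $\omg$.

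For (1), I would construct $S$ as the union of an increasing $\omg$-chain of countable regular trees $S_\alpha \subs \omg$ together with tree isomorphisms $f_\alpha \colon T\uhr \hgt(S_\alpha) \to S_\alpha$. For the base case, Lemma \ref{lm:ext} lets me choose $S_0 \supseteq p_0$ and an isomorphism $f_0 \colon T\uhr \beta_0 \to S_0$. At a successor stage $\alpha+1$, the current partial isomorphism $f_\alpha$ determines a countable condition $q_\alpha$; since $F_\alpha$ is nowhere dense, I can strengthen $q_\alpha$ to a countable condition whose basic neighborhood is disjoint from $F_\alpha$, and Lemma \ref{lm:ext} lets me arrange this strengthening so that it simultaneously extends $f_\alpha$ to an isomorphism $f_{\alpha+1} \colon T\uhr \hgt(S_{\alpha+1}) \to S_{\alpha+1}$. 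At limits I simply take unions. The resulting tree $S = \bigcup_\alpha S_\alpha$ lies in $U_0$, avoids every $F_\alpha$, and is isomorphic to $T$ via $\bigcup_\alpha f_\alpha$.

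For (2), I would run the analogous fusion but maintain an increasing sequence of nodes $s_\alpha$ inside $S_\alpha$ in place of a full isomorphism. At stage $\alpha+1$, I extend the current condition so that (i) it specifies a node $s_{\alpha+1}$ of some strictly larger height lying above $s_\alpha$ in the tree order, and (ii) its basic neighborhood is disjoint from $F_\alpha$. Both are achievable: pruning of regular trees permits (i) from every countable condition, while nowhere density of $F_\alpha$ permits (ii). At limits I take unions; if I arrange the heights of the $s_\alpha$ to form a cofinal set in $\omg$, then $\{s_\alpha \colon \alpha<\omg\}$ is an uncountable branch of $S = \bigcup_\alpha S_\alpha \in \mc U$.

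The main technical point I expect to have to be careful about is that, in part (1), the extension demanded by Lemma \ref{lm:ext} in order to realize a prescribed isomorphism type does not conflict with the strengthening needed to avoid $F_\alpha$. This compatibility should hold because any countable extension of $q_\alpha$ still defines a basic open subset of the neighborhood of $q_\alpha$, and nowhere density of $F_\alpha$ inside that subset continues to furnish the further room I need; combined with the homogeneity afforded by Lemma \ref{lm:ext}, this lets me carry out both requirements at once.
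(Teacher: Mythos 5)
Your part (2) is fine, and your fusion scheme for part (1) is in outline the same as the paper's, but there is a genuine gap at the stages of the fusion that cross a limit level, and this is exactly where the paper's proof does its real work. Suppose a limit stage $\lambda$ has just been passed, so $S_\lambda=\bigcup_{\alpha<\lambda}S_\alpha$ has limit height $\delta$ and $f_\lambda$ is an isomorphism between $T\uhr\delta$ and $S_\lambda$, i.e.\ the isomorphism is already defined cofinally below $\delta$. Lemma \ref{lm:ext} is of no use at this point: it only extends isomorphisms given on an initial segment of the form $T_{\leq\alpha}$ with $\alpha$ strictly below the height of the two trees, and there is no ``homogeneity'' over $S_\lambda$ at level $\delta$. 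Concretely, if you first strengthen $q_\lambda$ to an $F_\lambda$-avoiding condition, that strengthening has already decided which cofinal branches of $S_\lambda$ receive upper bounds at level $\delta$; if it continues a branch $b$ whose preimage $f_\lambda^{-1}[b]$ has no upper bound in $T_\delta$, or fails to continue one whose preimage does have an upper bound, then no isomorphism of end-extensions extending $f_\lambda$ exists, and no further strengthening can repair this, since later end-extensions never add nodes at level $\delta$ again. So the sentence ``Lemma \ref{lm:ext} lets me arrange this strengthening so that it simultaneously extends $f_\alpha$'' and the appeal to homogeneity in your closing paragraph are unjustified precisely at these stages.

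The missing step is the heart of the paper's argument: before doing anything else at such a stage, extend $S_\lambda$ by one level \emph{by hand}, adding upper bounds to exactly those cofinal branches $b\subs S_\lambda$ whose $f_\lambda$-preimage has an upper bound in $T_\delta$ (this keeps the approximation pruned because $T$ is pruned, and $f_\lambda$ then extends to the new level). Only after level $\delta$ has been fixed in this way may you extend arbitrarily inside the resulting basic open set to avoid $F_\lambda$ and then invoke Lemma \ref{lm:ext}, which now does apply because the isomorphism to be extended is defined on a level-$\leq\delta$ initial segment; the free limit levels strictly above $\delta$ are handled by the back-and-forth of that lemma. With this insertion your construction becomes exactly the paper's. (Your treatment of part (2) differs harmlessly from the paper, which deduces it from part (1) applied to a regular tree with an uncountable branch; your direct fusion with a distinguished chain works, provided that at stages crossing a limit you likewise choose the new level so that it continues the branch generated by the chain.)
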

\begin{proof} (1) Suppose that $\mc U=[S^0]\setm \bigcup\{Y_\xi:\xi<\omg\}$ where each $Y_\xi$ is a nowhere dense set of trees. I.e., any countable tree $S$ has a countable end-extension $S'$ so that any extension of $S'$ into a tree on $\omg$ is not in $Y_\alpha$. Here, $[S^0]$ denotes the (basic open) set of all trees extending $S^0$.

Now, we construct an increasing sequence of countable trees $(S^\xi)_{\xi<\omg}$ and isomorphisms $f_\xi:S^\xi\to T_{<\delta_\xi}$. Given  $(S^\xi)_{\xi<\zeta}$, we look at $S^{<\zeta}=\bigcup_{\xi<\zeta} S^\xi$. The latter is isomorphic to $T_{<\delta}$ where $\delta=\sup_{\xi<\zeta}\delta_\xi$ witnessed by $$f_{<\zeta}=\bigcup_{\xi<\zeta} f_\xi:S^{<\zeta}\to T_{<\delta}.$$ Define an end extension $\overline{S^{<\zeta}}$ of $S^{<\zeta}$ of height $\delta+1$ by adding upper bounds to exactly those branches $b\subset S^{<\zeta}$ so that the chain $f_{<\zeta}[b]$ has an upper bound in $T_\delta$. Clearly, there is an isomorphism $\overline{ f_{<\zeta}}:\overline{S^{<\zeta}}\to T_{\delta+1}$ that extends  $f_{<\zeta}$. Now, let $S^\zeta$ be an end-extension of $\overline{S^{<\zeta}}$ which cannot be extended to a tree on $\omega_1$ that is in $Y_\zeta$. This can be done since $Y_\zeta$ is nowhere dense. Finally, apply Lemma \ref{lm:ext} to extend $\overline{ f_{<\zeta}}$ to some isomorphism $f_\zeta:S^\zeta\to T_{<\delta_\zeta}$.

This finishes the construction and the tree $S=\bigcup\{S^\zeta:\zeta<\omg\}$ is as desired.

(2) Since there is a regular $\aleph_1$-tree $T$ which contains an uncountable branch, we can apply (1).
\end{proof}

We shall use the fact that any non-meager set with the Baire property is somewhere co-meager. The previous lemma and latter fact immediately yields the following corollaries.
 
 \begin{cor}\begin{enumerate}
 \item\label{it:nonm} The isomorphism class of any regular $\aleph_1$-tree $T$ is everywhere non-meager.
 \item \label{it:sep} Suppose that $T,S$ are non-isomorphic $\aleph_1$-trees. Then their isomorphism classes cannot be separated by sets with the Baire-property.
 \item\label{it:x} The set of trees isomorphic to a fixed tree without an uncountable branch is $\siii$ but does not have the Baire property and hence is not Borel.
     \item\label{it:y} The sets $\atree$ and $\satree$ do not have the Baire property. In turn, $\atree$ and $\satree$ are not Borel.
     \item\label{it:z}  If $\stree\neq \emptyset$ then $\stree$ does not have the Baire property and so $\stree$ is not Borel.
     \item\label{it:k} If $\ktree\neq \emptyset$ then $\ktree$ does not have the Baire property and so $\ktree$ is not Borel.
 \end{enumerate} 

 \end{cor}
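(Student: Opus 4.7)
The plan is to derive all six items uniformly from Lemma \ref{lm:at1} together with the standard dichotomy that a set with the Baire property is either meager or somewhere co-meager. Write $\mathrm{iso}(T)$ for the isomorphism class of a regular $\aleph_1$-tree $T$ inside the space of regular trees on $\omg$. For (1), I would suppose toward a contradiction that $\mathrm{iso}(T)$ is meager in some basic open $\mc U$. Then $\mc U \setm \mathrm{iso}(T)$ is co-meager in $\mc U$ and in particular somewhere co-meager; Lemma \ref{lm:at1}(1) applied to $T$ would place an isomorphic copy of $T$ inside $\mc U \setm \mathrm{iso}(T)$, an immediate contradiction.

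For (2), I would suppose disjoint sets $A \supseteq \mathrm{iso}(T)$ and $B \supseteq \mathrm{iso}(S)$ both have the Baire property. By (1), $\mathrm{iso}(S) \subseteq B$ is non-meager, hence $B$ is a non-meager Baire set and therefore somewhere co-meager. Lemma \ref{lm:at1}(1) applied to $T$ would then produce a copy of $T$ inside $B$, contradicting $B \cap \mathrm{iso}(T) = \empt$.

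Items (3)--(5) should follow by a common template. First, $\mathrm{iso}(T)$ is $\siii$ since $S \cong T$ iff there exists $f \in \omg^\omg$ coding an order-isomorphism $S \to T$. If $\mathrm{iso}(T)$ had the Baire property, then by (1) it would be non-meager and hence somewhere co-meager, so Lemma \ref{lm:at1}(2) would produce a tree with an uncountable branch inside it --- impossible when $T$ itself has none. For a class $\mc T \in \{\atree, \satree\}$, and also for $\stree$ when non-empty, $\mc T$ contains $\mathrm{iso}(T_0)$ for some $T_0 \in \mc T$ and so by (1) is non-meager; if $\mc T$ had the Baire property, Lemma \ref{lm:at1}(2) would put a tree with an uncountable branch into $\mc T$, contradicting the definition of the class. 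Because special Aronszajn trees exist in ZFC, the statement for $\satree$ is unconditional.

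The main obstacle is (6): Kurepa trees themselves have uncountable branches, so Lemma \ref{lm:at1}(2) no longer yields a contradiction. The fix will be to invoke Lemma \ref{lm:at1}(1) with a cleverly chosen witness. Assuming $\ktree \neq \empt$, I would fix a Kurepa tree $K$ and a (ZFC-existing) Aronszajn tree $T$. By (1), $\mathrm{iso}(K) \subseteq \ktree$ is non-meager, so if $\ktree$ had the Baire property, $\ktree$ would be somewhere co-meager, and Lemma \ref{lm:at1}(1) would produce a copy of $T$ inside $\ktree$. But an Aronszajn tree is not Kurepa, the desired contradiction.
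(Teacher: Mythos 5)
Your proposal is correct and follows essentially the same route as the paper: everything is derived from Lemma \ref{lm:at1} plus the fact that a non-meager set with the Baire property is somewhere co-meager, with parts (1)--(2) from Lemma \ref{lm:at1}(1), parts (3)--(5) from Lemma \ref{lm:at1}(2), and part (6) handled exactly as in the paper by planting a copy of a (special) Aronszajn tree --- which cannot be Kurepa --- inside a hypothetically co-meager portion of $\ktree$ via Lemma \ref{lm:at1}(1).
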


\begin{tproof}(\ref{it:nonm}) and (\ref{it:sep}) immediate from Lemma \ref{lm:at1} (1).

(\ref{it:x}) If such an isomorphism class has the Baire property then there is a somewhere co-meager set of trees all isomorphic to a fixed tree with no uncountable branch. This is not possible by Lemma \ref{lm:at1} (2). 

 (\ref{it:y}), (\ref{it:z}) and (\ref{it:k}) again follow from Lemma \ref{lm:at1} (2): these classes are closed under isomorphism classes so must be everywhere non-meager. If they are Baire then they are somewhere comeager and hence contain a tree with an uncountable branch and also a special Aronszajn tree. This leads to a contradiction in case of any of these classes.






\end{tproof}

So, whenever there is a Suslin tree then the set of all Suslin-trees is not Borel (but always $\piii$). Could it be analytic too? We show that this is independent (even assuming CH). 



\begin{prop}
In the Abraham-Shelah model, $\atree\neq \satree\in \diii$ and $\stree\in \diii$ as well.
\end{prop}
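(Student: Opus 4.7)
The plan is to prove the three assertions in turn, using the Abraham--Shelah theorem (hereafter AS) as the main structural input. For $\atree\neq\satree$, the fully Suslin tree $R$ produced by AS is Aronszajn by definition but not special, since a special Aronszajn tree of size $\aleph_1$ must be a union of countably many antichains and therefore contain an uncountable antichain, contradicting the Suslinness of $R$.

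For $\satree\in\diii$, Observation \ref{obs:basic} already gives $\satree\in\siii$, so only the $\piii$ side requires the AS model. I will show that for a regular $\aleph_1$-tree $T$,
\[
T\in\satree\ \iff\ T\text{ has no uncountable branch and no derived tree of }R\text{ club-embeds into }T.
\]
The right-hand side is $\piii$: ``$T$ is a regular $\aleph_1$-tree'' is Borel, ``$T$ has no uncountable branch'' is $\piii$, and ``some derived tree of $R$ club-embeds into $T$'' is $\siii$ (one codes $n<\omega$, the tuple $s_0,\dots,s_{n-1}$ from $R$, the club $C$, and the embedding as a single element of $\omg^\omg$). For ``$\Rightarrow$'', any club-embedded copy of a derived tree $D$ of $R$ would be isomorphic to $D\uhr C$, which is Suslin (since $R$ is fully Suslin, so is $D$, and Suslinness survives restriction to a club), but a Suslin tree cannot sit inside a special tree. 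For ``$\Leftarrow$'', the AS dichotomy applied to Aronszajn $T$ forces $T$ to club-embed into $U$; pulling back a specialization of $U$ yields a specialization of $T\uhr C$, and by the standard argument (assigning distinct labels to each countable ``gap'' subtree using a tag coordinate) this extends to a specialization of all of $T$.

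For $\stree\in\diii$, we have $\stree\in\piii$ by Observation \ref{obs:basic}, so only the $\siii$ side needs the model. Fix representatives $(R_\alpha)_{\alpha<\omg}$ of the $\aleph_1$-many club-isomorphism classes of Suslin trees supplied by AS. The plan is to show that for a regular $\aleph_1$-tree $T$,
\[
T\in\stree\ \iff\ \exists\alpha<\omg,\ \exists\text{ club }C\subs\omg:\ T\uhr C\cong R_\alpha\uhr C,
\]
which is visibly $\siii$ (code $\alpha$, $C$, and the isomorphism into one element of $\omg^\omg$). The ``$\Rightarrow$'' direction is the classification clause of AS. For ``$\Leftarrow$'', any uncountable chain in $T$ restricts to an uncountable chain in $T\uhr C$ (one node per level of $C$), which transfers along the isomorphism to an uncountable chain in the Aronszajn tree $R_\alpha$---impossible---while any uncountable antichain in $T$ lifts to an antichain of the same cardinality in $T\uhr C$ by extending each node up to the next level of $C$ (this preserves injectivity and pairwise incomparability), giving an uncountable antichain in the Suslin tree $R_\alpha$, again impossible.

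The heaviest lifting in this proposition is done by AS itself; once the dichotomy and the club-isomorphism classification are granted, the remaining work consists of routine verifications that tree-theoretic notions such as Aronszajnness, Suslinness, and specialness behave well under club-restriction of $\aleph_1$-trees, and of coding the resulting quantifiers so that the desired $\siii$/$\piii$ bounds are visible.
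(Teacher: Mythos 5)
Your proposal is correct and follows essentially the same route as the paper: you characterize $\satree$ co-analytically via the Abraham--Shelah dichotomy (special iff Aronszajn and no derived tree of $R$ club-embeds) and $\stree$ analytically via club-isomorphism to one of the $\aleph_1$-many representatives, then combine with Observation \ref{obs:basic} to land in $\diii$. Your extra verifications (that Suslinness and specialness pass between $T$ and $T\uhr C$) are exactly the routine facts the paper leaves implicit.
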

\begin{cproof}
Indeed, there are non-special Aronszajn trees (even Suslin-trees) and a tree $T$ is special if and only if it club-embeds no derived subtree of a fixed Suslin tree $S$. Since there are only $\aleph_1$-many such derived subtrees, this gives $\satree\in \piii$ and so $\satree\in \Delta_1^1$ (using Observation \ref{obs:basic}).

In the Abraham-Shelah model, there are only $\aleph_1$ many Suslin trees modulo club isomorphism so fix a representative of each class and collect them as $\mc S$. Now, being Suslin is characterized by being club-isomorphic to some element of $\mc S$ which in turn implies  $\stree\in \siii$ and  $\stree\in \diii$ as well (using again Observation \ref{obs:basic}).
\end{cproof}






\medskip

In the next section, we show that both $\atree$ and $\stree$ are $\piii$-complete if we assume $V=L$.

 \section{Reductions between subsets of $\omg$ and $\aleph_1$-trees}\label{sec:reduce}
 
 


Our first theorem in this section establishes a continuous reduction between stationarity and $\stree$ in a strong form.

 \begin{thm}\label{thm:red1}
  Suppose $\diamondsuit^+$. There is a map $X\mapsto T^X$ from subsets of $\omg$ to the set of downward closed $\aleph_1$-subtrees of $2^{<\omg}$ so that \begin{enumerate}
  \item if $X\cap \alpha=Y\cap \alpha$ then $T^X\uhr \alpha=T^Y\uhr \alpha$,
  \item if $X$ is stationary then    $T^X$ is Suslin, and
  \item if $X$ is non-stationary then $T^X$ has an uncountable branch.
  \end{enumerate}

 \end{thm}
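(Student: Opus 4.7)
The plan is to carry out a $\diamondsuit^+$-guided construction parallel to Jensen's Suslin-tree construction from $\diamondsuit$, but parametrised by $X$: the antichain-sealing step is performed only at limit levels $\alpha\in X$, while at $\alpha\notin X$ I will insert canonically chosen cofinal extensions that glue into an uncountable branch precisely when $X$ is non-stationary.

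Fix a $\diamondsuit^+$-oracle $\underline N=(N_\alpha)_{\alpha<\omg}$, independent of $X$. Recursively on $\alpha<\omg$, define $T^X\subseteq 2^{<\omg}$ by $T^X_0=\{\emptyset\}$ and $T^X_{\beta+1}=\{s^\frown i:s\in T^X_\beta,\ i\in 2\}$; at each limit $\alpha$, I enumerate $T^X\uhr\alpha=\{s_n:n<\omega\}$ via a canonical wellorder coded in $N_\alpha$, pick a cofinal branch $b_n\in 2^\alpha$ through $T^X\uhr\alpha$ with $s_n\leq b_n$ for each $n$, and set $T^X_\alpha=\{b_n:n<\omega\}$. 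The choice of $b_n$ splits into two cases: if $\alpha\in X$ and $N_\alpha$ codes a maximal antichain $A\subseteq T^X\uhr\alpha$, pick $t_n\in A$ comparable with $s_n$ and let $b_n$ be any cofinal extension of $\max(s_n,t_n)$; otherwise, let $b_n$ be the canonical cofinal extension of $s_n$ built along a cofinal $\omega$-sequence in $\alpha$ drawn from $N_\alpha$.

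Property (1) is immediate by induction, since $T^X_\alpha$ depends only on $X\cap\alpha$ and the fixed data $\underline N\uhr(\alpha+1),\ T^X\uhr\alpha$. For (2), assume $X$ is stationary; any uncountable antichain extends to an uncountable maximal antichain $A'\subseteq T^X$. By $\diamondsuit^+$ applied to $A'$, the set of $\alpha$ with $A'\cap T^X\uhr\alpha$ maximal in $T^X\uhr\alpha$ and coded by $N_\alpha$ contains a club, which meets the stationary $X$; at such $\alpha\in X$ the sealing step forces every node at level $\geq\alpha$ to extend an element of $A'\cap T^X\uhr\alpha$, so $A'\subseteq T^X\uhr\alpha$ is countable, a contradiction.

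Property (3) is the main obstacle. For $X$ non-stationary, pick a club $C\subseteq\omg\setm X$; by $\diamondsuit^+$ applied to $C$, the set $C^\ast=\{\alpha\in C:C\cap\alpha\in N_\alpha\}$ is a club in $\omg$. I arrange the canonical extension at $\alpha\in C^\ast$ to use $C\cap\alpha$ as its defining cofinal $\omega$-sequence, so the branches chosen at various $\alpha\in C^\ast$ cohere and assemble into a function $b\in 2^\omg$ with $b\uhr\alpha\in T^X_\alpha$ for every $\alpha\in C^\ast$. The hard part is to guarantee $b\uhr\alpha\in T^X_\alpha$ at the remaining limit $\alpha$---especially those in $X$, where sealing can in principle exclude $b\uhr\alpha$, since a cofinal branch of $T^X\uhr\alpha$ need not extend a given maximal antichain (as witnessed by $T=2^{<\omega}$, $A=\{0^k1:k<\omega\}$, $b=0^\omega$). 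Resolving this requires extra bookkeeping in the canonical extensions so that $b$ is generic in a suitable sense over $N_\alpha$ on a further club; on that club each $N_\alpha$-coded maximal antichain of $T^X\uhr\alpha$ must contain a node below $b\uhr\alpha$, making $b\uhr\alpha$ a legitimate sealing branch. Executing this diagonalisation coherently across $\omg$, while simultaneously preserving the sealing that underlies (2), is the principal technical step of the proof.
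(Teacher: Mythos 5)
Your skeleton matches the paper's (seal maximal antichains from the oracle at limit levels in $X$, use the oracle to keep branches alive at limit levels off $X$), and your argument for (2) is essentially the right one (modulo sealing against \emph{all} maximal antichains of $T^X\uhr\alpha$ lying in $N_\alpha$, not just one coded antichain). But for (3) the proposal stops exactly at the step that constitutes the proof, and the limit-level rule you chose off $X$ is too weak to carry it out. At $\alpha\notin X$ you add only countably many branches, one ``canonical extension along a cofinal $\omega$-sequence'' per node of $T^X\uhr\alpha$. When $X$ is non-stationary and you try to assemble a global branch along a club $C^*\subs\omg\setm X$, the branch is determined by an increasing transfinite sequence of earlier choices; at a level $\alpha\in C^*$ which is a limit of points of $C^*$ there is no reason its restriction to $\alpha$ should coincide with the canonical extension of a \emph{single} node along \emph{some} $\omega$-sequence chosen at stage $\alpha$ --- and you cannot simply ``use $C\cap\alpha$ as the defining sequence,'' since $T^X$ must be built uniformly in $X$ with no reference to $C$ (and $C\cap\alpha$ generally has order type $>\omega$). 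The paper resolves this with a stronger rule at limit $\beta\notin X$: continue \emph{every} cofinal branch of $T^X_{<\beta}$ that lies in $N_\beta$. Then, for non-stationary $X$, one fixes a club $B\subs\omg\setm X$ and a club $C$ with $B\cap\alpha,C\cap\alpha\in N_\alpha$ for $\alpha\in C$, enumerates $B\cap C$ as $(\beta_\alpha)_{\alpha<\omg}$, and recursively takes $t_\alpha\in T^X_{\beta_\alpha}$ to be the $\prec$-minimal extension of the chain built so far; the point, proved by induction, is that at limit $\alpha$ the chain $(t_{\alpha'})_{\alpha'<\alpha}$ is uniquely definable from $\bar N\uhr(B\cap C\cap\beta_\alpha)\in N_{\beta_\alpha}$, hence lies in $N_{\beta_\alpha}$, hence its union is continued at level $\beta_\alpha$ precisely because $\beta_\alpha\notin X$. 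That definability induction is the missing idea, and your ``extra bookkeeping/genericity'' sketch does not supply a substitute.

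You also mis-locate the difficulty: the sealed levels $\alpha\in X$ are not an obstacle at all. Since $T^X$ is a downward closed subtree of $2^{<\omg}$, once $b\uhr\beta_\alpha\in T^X$ for cofinally many $\beta_\alpha$, every restriction $b\uhr\gamma$ (including $\gamma\in X$) is an initial segment of a node already in the tree and hence belongs to $T^X$; no branch ever has to be \emph{added} at a sealed level, so no genericity of $b$ over $N_\gamma$ is needed and your $2^{<\omega}$ antichain example is beside the point. The genuine work is entirely at the unsealed limit levels, where the ``all branches in $N_\beta$'' rule plus the definability induction above does the job.
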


Note that by CH, $2^{<\omg}$ has size $\aleph_1$ so we can easily transform our trees to live on $\omg$. So, we immediately get the following corollary by Theorem \ref{thm:fok}.

  \begin{cor}If $V=L$ then $\atree$ and $\stree$ are both $\piii$-complete.
  \end{cor}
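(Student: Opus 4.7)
The plan is to deduce the corollary directly from Theorem \ref{thm:red1} and Theorem \ref{thm:fok}. Under $V=L$ we have $\diamondsuit^+$, so Theorem \ref{thm:red1} produces a map $X\mapsto T^X$ with the three listed properties, and Theorem \ref{thm:fok} says that $\stat$ itself is $\piii$-complete. Since Observation \ref{obs:basic} already places $\atree$ and $\stree$ inside $\piii$, the only thing left is to exhibit, for each of these two classes $\mc T$, a continuous reduction $\pi\colon \omg^\omg \to \omg^\omg$ from $\stat$ to $\mc T$. Composing such a $\pi$ with the continuous reduction from an arbitrary $\piii$ set into $\stat$ provided by Theorem \ref{thm:fok} will then yield $\piii$-completeness of $\mc T$.

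For the reduction, I would first fix, using CH, a height-respecting bijection $c\colon 2^{<\omg}\to \omg$, so that subtrees of $2^{<\omg}$ can be viewed canonically as codes in $\omg^\omg$. Identifying each $X\subseteq \omg$ with its characteristic function, I would set $\pi(X)$ to be the $c$-code of the tree $T^X$ produced by Theorem \ref{thm:red1}. Clause $(1)$ of that theorem is exactly the locality statement needed for continuity: whenever $X\cap\alpha=Y\cap\alpha$, we have $T^X\uhr\alpha=T^Y\uhr\alpha$, and hence $\pi(X)\uhr\gamma=\pi(Y)\uhr\gamma$ for a correspondingly large $\gamma<\omg$. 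This is precisely continuity of $\pi$ in the $\omg$-Baire topology.

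To check that $\pi$ is a reduction, I would invoke clauses $(2)$ and $(3)$ of Theorem \ref{thm:red1}. If $X$ is stationary then $T^X$ is Suslin, hence in particular an Aronszajn tree, so $T^X\in \stree\subseteq \atree$. If $X$ is non-stationary then $T^X$ admits an uncountable branch, so $T^X\notin\atree$ and therefore also $T^X\notin\stree$ (recall we are working inside the Borel class of regular $\aleph_1$-trees, where membership in $\atree$ simply means having no uncountable branch). Combining the two implications,
\[
X\in\stat \iff T^X\in\stree \iff T^X\in\atree.
\]
Thus the single map $\pi$ simultaneously reduces $\stat$ to $\stree$ and to $\atree$, and $\piii$-completeness of both classes follows.

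I do not anticipate any real obstacle beyond bookkeeping: the substantive content is entirely contained in Theorem \ref{thm:red1}, which is taken as given, and Theorem \ref{thm:fok}. The only point worth explicit verification is that the coding bijection $c$ can be chosen so that the assignment ``downward-closed subtree of $2^{<\omg}$'' $\mapsto$ ``element of $\omg^\omg$'' preserves the locality property; this is immediate once $c$ is chosen to enumerate $2^{<\alpha}$ before $2^{<\beta}$ whenever $\alpha<\beta$.
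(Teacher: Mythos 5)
Your proposal is correct and follows exactly the route the paper takes: the paper's own proof is just the terse remark that by CH one can transfer the trees $T^X$ from $2^{<\omg}$ to $\omg$ and then conclude from Theorem \ref{thm:red1} together with Theorem \ref{thm:fok}, which is precisely the composition of reductions you spell out. The additional details you supply (continuity via clause (1), the equivalences $X\in\stat\iff T^X\in\stree\iff T^X\in\atree$, and the upper bound from Observation \ref{obs:basic}) are all accurate and are exactly what the paper leaves implicit.
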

  
  Let us proceed with the proof of the theorem.
 
 
\begin{proof}[Proof of Theorem \ref{thm:red1}]Let is start by fixing a  $\diamondsuit^+$-oracle $\bar N$ i.e., a sequence of elementary submodels $(N_\alpha)_{\alpha<\omg}$ of $(H(\aleph_2),\in,\prec)$ so that $(N_\alpha)_{\alpha<\beta}\in N_\beta$ and $\bar N$ witnesses $\diamondsuit^+$.

 
 
 \medskip
 
 Given $X\subs \omg$, we construct the downward closed subtree $T^X\subs 2^{<\omg}$ level by level in an induction, so that $(T^X_\alpha)_{\alpha< \beta}\in N_{\beta+1}$ for all $\beta<\omg$. In each step, we shall add a new countable level to the tree constructed so far. In successor steps $\beta=\alpha+1<\omg$, we simply take the binary extension $T^X_\beta=\{s\smf i:s\in T^X_\alpha,i<2\}$ of $T^X_\alpha$.
 
 Now, assume $\beta\in \omg$ is a limit ordinal. If $\beta\in \omg\setm X$ then we let $$T^X_\beta=\{b\in 2^\beta\cap N_{\beta}: b\uhr \alpha\in T^X_{\alpha} \textmd{ for all }\alpha<\beta\}.$$ In other words, we continue all branches through $T^X_{<\beta}$ which are in $N_\beta$. Since $N_\beta$ is countable, this is a valid extension and is defined in $N_{\beta+1}$. Moreover, any $s\in T^X_{<\beta}$ has an extension in $T^X_\beta$ (i.e., the tree remains pruned).
 
 Second, if $\beta\in X$ then, working in $N_{\beta+1}$, we make sure that 
 \begin{enumerate}
  \item any $s\in T^X_{<\beta}$ has an extension in $T^X_\beta$, and
  \item for any $A\in N_\beta$ so that $A\subs T^X_{<\beta}$ is a maximal antichain, any  new element $t\in T^X_\beta$ is above a node in $A$.
 \end{enumerate}

  Since $N_\beta$ is countable and $N_\beta,T^X_{<\beta}\in N_{\beta+1}$, the level $T^X_\beta$ can be constructed  in $N_{\beta+1}$ (just as in the classical construction of Suslin trees \cite{kunen}). 
 
 This induction certainly defines an $\aleph_1$-tree $T^X$ for any $X\subset \omg$. The next two claims will conclude the proof of the theorem.
 
 \begin{clm}
If $X$ is stationary then $T^X$ is Suslin.  
 \end{clm}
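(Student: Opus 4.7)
The strategy is essentially the classical $\diamondsuit$-style Suslin construction adapted to the conditional tree: sealing is only performed at levels $\beta\in X$, and the stationarity of $X$ delivers enough such $\beta$ to catch any putative uncountable antichain. Fix a maximal antichain $A\subseteq T^X$ and, using a fixed bijection $2^{<\omg}\leftrightarrow \omg$ (available by CH), view $A$ as a subset of $\omg$. The aim is to produce a limit $\beta\in X$ such that $A\subseteq T^X\upharpoonright \beta$; countability will then follow since each level of $T^X$ is countable.

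First I would apply the $\diamondsuit^+$ property of $\bar N$ to (a code of) $A$ and separately to $X$, then intersect the two resulting clubs to obtain a club $C_0$ with the property that for every $\alpha\in C_0$, both $X\cap\alpha$ and $A\cap\alpha$ belong to $N_\alpha$. Since $T^X\upharpoonright\alpha$ is computed recursively from $X\cap\alpha$ together with $(N_\gamma)_{\gamma<\alpha}$, and $(N_\gamma)_{\gamma<\alpha}\in N_\alpha\prec H(\aleph_2)$, elementarity gives $T^X\upharpoonright\alpha\in N_\alpha$ for all $\alpha\in C_0$. Next, I would refine $C_0$ to a sub-club $C_1$ on which $A\cap T^X\upharpoonright\alpha$ is a maximal antichain of $T^X\upharpoonright\alpha$; this is a routine closure argument, since any countable subtree of $T^X$ can be dominated by a countable portion of $A$.

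Now, using the stationarity of $X$, pick any limit ordinal $\beta\in X\cap C_1$. By clause (2) of the limit step construction at $\beta\in X$, every new node $t\in T^X_\beta$ lies above some element of every maximal antichain of $T^X\upharpoonright\beta$ that belongs to $N_\beta$; in particular $t$ extends some $a\in A\cap T^X\upharpoonright\beta$. Because $T^X$ is pruned, every node of height $\geq\beta$ extends some $t\in T^X_\beta$, and hence is comparable with (in fact above) some $a\in A\cap T^X\upharpoonright\beta$. Since $A$ is an antichain, no element of $A$ can have height $\geq\beta$, so $A\subseteq T^X\upharpoonright\beta$ is countable.

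The main obstacle is the reflection step: one must guarantee simultaneously that $A\cap T^X\upharpoonright\alpha\in N_\alpha$ and that it is a genuine maximal antichain of the initial segment $T^X\upharpoonright\alpha$. This is where $\diamondsuit^+$ (as opposed to plain $\diamondsuit$) pays off, since it gives a club on which the oracle sees \emph{both} coherent pieces of $A$ and $X$ at once. Once this reflection is secured, the argument reduces to the standard Jensen sealing trick applied only at the stationarily many opportunities afforded by $X$.
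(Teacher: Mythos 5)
Your argument is correct and is essentially the paper's own proof: seal only at levels in $X$, use $\diamondsuit^+$ to reflect a maximal antichain $A$ (together with $X$) to club many levels where $A\cap T^X_{<\alpha}\in N_\alpha$ is a maximal antichain of $T^X_{<\alpha}$, and then a single limit $\beta\in X$ in that club forces $A\subseteq T^X_{<\beta}$. You merely spell out the club-refinement and coding details that the paper compresses into ``$\bar N$ guesses $A$ at club many points.''
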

\begin{proof}
Suppose that $A\subs T^X$ is a maximal antichain. Since $\bar N$ guesses $A$ at club many points, we can find some $\beta\in X$ so that $A\cap T^X_{<\beta}\in \mc N_\beta$ and $A\cap T^X_{<\beta}$ is a maximal antichain in $T^X_{<\beta}$. So, at stage $\beta$, we made sure that any $t\in T^X_\beta$ is above some element of $A\cap T^X_{<\beta}$. In turn, we must have $A\subs T^X_{<\beta}$ and so $A$ is countable.

\end{proof}

 \begin{clm}
  If $X$ is non stationary then $T^X$ has an uncountable branch.
 \end{clm}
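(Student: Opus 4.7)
The plan is to construct an uncountable branch of $T^X$ directly by exploiting the fact that at non-$X$ limit levels $\alpha$, every branch of $T^X_{<\alpha}$ that lies in the model $N_\alpha$ is promoted to a node of $T^X_\alpha$. The whole proof then reduces to cooking up a coherent chain $(t_\xi)_{\xi<\omg}$ through $T^X$ so that each $t_\xi$ lies in the corresponding $N_{\alpha_\xi}$.

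First, since $X$ is non-stationary, fix a club $C\subs \omg\setm X$. By the $\diamondsuit^+$-property of the oracle $\bar N$ applied to a code for the pair $(X,C)$, we find a club $D\subs C$ such that $X\cap \alpha,\, D\cap\alpha\in N_\alpha$ for every $\alpha\in D$; enumerate $D$ continuously as $(\alpha_\xi)_{\xi<\omg}$. Because the recursion defining $T^X$ level-by-level from $X$ and $\bar N$ is absolute (and $\bar N\uhr\alpha\in N_\alpha$ by the oracle property), for each $\alpha\in D$ the model $N_\alpha$ correctly computes $T^X_{<\alpha}$; in particular $T^X_{<\alpha}\in N_\alpha$ and, being countable at each level, its nodes lie in $N_\alpha$.

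Next, let $\prec$ be the fixed well-order of $H(\aleph_2)$. I build $(t_\xi)_{\xi<\omg}$ by recursion so that $t_\xi\in T^X_{\alpha_\xi}\cap N_{\alpha_\xi}$ and $t_{\xi'}<_{T^X} t_\xi$ for $\xi'<\xi$. At a successor $\xi+1$: in $V$, prunedness of $T^X_{<\alpha_{\xi+1}}$ provides a node $b$ at level $\alpha_{\xi+1}$ extending $t_\xi$ whose every proper initial segment lies in the corresponding $T^X_\gamma$; by elementarity such a $b$ exists in $N_{\alpha_{\xi+1}}$, and we let $t_{\xi+1}$ be the $\prec$-least such. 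Since $\alpha_{\xi+1}\notin X$, membership in $N_{\alpha_{\xi+1}}$ is exactly what the construction of $T^X_{\alpha_{\xi+1}}$ demands, so $t_{\xi+1}\in T^X_{\alpha_{\xi+1}}$. At a limit $\xi$, set $t_\xi:=\bigcup_{\xi'<\xi} t_{\xi'}$, which lives at level $\alpha_\xi=\sup_{\xi'<\xi}\alpha_{\xi'}\in D$.

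The main obstacle is verifying $t_\xi\in T^X_{\alpha_\xi}$ at limit $\xi$: since $\alpha_\xi\notin X$, this reduces to checking $t_\xi\in N_{\alpha_\xi}$. The point is that the whole recursion can be carried out inside $N_{\alpha_\xi}$, since its parameters $\bar N\uhr\alpha_\xi$, $X\cap\alpha_\xi$, $D\cap\alpha_\xi$ and (the restriction of) $\prec$ all belong to $N_{\alpha_\xi}$ — the first by the oracle property, the next two by our choice of $D$, and the last by elementarity. By elementarity $N_{\alpha_\xi}$ then computes the same sequence $(t_{\xi'})_{\xi'<\xi}$, so $t_\xi=\bigcup_{\xi'<\xi} t_{\xi'}\in N_{\alpha_\xi}$ as required. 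Finally, $b:=\bigcup_{\xi<\omg} t_\xi$ is an uncountable branch of $T^X$, completing the claim.
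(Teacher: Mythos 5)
Your proof is correct and takes essentially the same route as the paper: fix a club of limit levels disjoint from $X$ on which the oracle guesses the relevant data, and build a canonical ($\prec$-least) increasing chain of nodes at those levels whose initial segments are reconstructible inside the models $N_\alpha$, so that at limit levels (which avoid $X$) the union of the chain lies in $N_\alpha$ and is therefore put on level $\alpha$ of $T^X$. Your only deviation is to additionally guess $X$ and $C$ so that $N_\alpha$ recomputes $T^X\uhr\alpha$ outright — a harmless (in fact slightly cleaner) way of securing the unique definability of the chain, which the paper extracts from $\bar N\uhr B\cap C\cap \beta_\alpha$ alone.
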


\begin{proof}
There is some club $B$ that is disjoint from $X$, and there is a club $C\subs \omg$ so that $C\cap \alpha,B\cap \alpha\in   N_\alpha$ whenever $\alpha\in C$. In particular, $C\cap B\cap \alpha\in N_\alpha$ for any $\alpha \in C\cap B$. Let  $\{\beta_\alpha:\alpha<\omg\}$ be the increasing enumeration of  $C\cap B$.

We construct $t_\alpha\in T^X_{\beta_\alpha}$ so that 
\begin{enumerate}
 \item $t_{\alpha'}<t_\alpha$ for all $\alpha'<\alpha<\omg$, 
 \item $(t_{\alpha'})_{\alpha'<\alpha}\in N_{\beta_\alpha}$, and
 \item $t_\alpha$ is the $\prec$-minimal element of $2^{\beta_\alpha}$ that extends all elements in the chain $(t_{\alpha'})_{\alpha'<\alpha}$.
\end{enumerate}
In limit steps, note that $\bar N\uhr B\cap C\cap \beta_\alpha=(N_{\beta_{\alpha'}})_{\alpha'<\alpha}\in N_{\beta_\alpha}$. So the sequence $(t_{\alpha'})_{\alpha'<\alpha}$ is in $N_{\beta_\alpha}$ too since it can be uniquely defined from $\bar N\uhr B\cap C\cap \beta_\alpha$. As $\beta_\alpha\notin X$, we sealed all branches that are in $N_{\beta_\alpha}$, so $t_\alpha\in T^X_{\beta_\alpha}$ as well. In turn, $(t_{\alpha'})_{\alpha'\leq \alpha}\in N_{\beta_\alpha+1}$.


\end{proof}

This proves the theorem.
\end{proof}

It would be interesting to see whether a single Suslin-tree suffices to construct such a reduction or if weaker reductions (say between stationarity and $\atree$) exist under weaker assumptions than $\diamondsuit^+$.

\medskip

Next, we present a variation that reduces non-stationarity to $\satree$.

  \begin{thm}
  Suppose $\diamondsuit^+$. There is a map $X\mapsto T^X$ from subsets of $\omg$ to the set of downward closed $\aleph_1$-subtrees of $2^{<\omg}$ so that \begin{enumerate}
  \item if $X\cap \alpha=Y\cap \alpha$ then $T^X\uhr \alpha=T^Y\uhr \alpha$,
  \item if $X$ is stationary then    $T^X$ is Suslin, and
  \item if $X$ is non-stationary then $T^X$ is a special Aronszajn tree.

  \end{enumerate}

 \end{thm}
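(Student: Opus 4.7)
My plan is to repeat the construction of Theorem \ref{thm:red1}, but to force the non-$X$ part of $T^X$ to admit a strictly increasing map into $\mb Q$, which automatically makes that part a countable union of antichains. Abstractly: build $T^X$ as a downward-closed tree of strictly increasing bounded sequences in $\mb Q$ (then encoded inside $2^{<\omg}$ via a standard bijection). At a successor $\alpha+1$, extend each $t \in T^X_\alpha$ by two successors $t\smf q_0$ and $t\smf q_1$ with $q_0\neq q_1$ rational and strictly greater than $\sup t$ (kept below a uniform bound). At a limit $\beta \notin X$, for each $s \in T^X_{<\beta}$ and each rational $q > \sup s$ in $N_\beta$, include in $T^X_\beta$ the canonical strictly-increasing interpolation of length $\beta$ extending $s$ with supremum exactly $q$; this produces only countably many new nodes, each with rational supremum. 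At a limit $\beta \in X$, run the Suslin-sealing fusion of Theorem \ref{thm:red1}: for each $s \in T^X_{<\beta}$ build a single cofinal extension of $s$ that meets every maximal antichain $A\in N_\beta$ of $T^X_{<\beta}$. Density of $\mb Q$ ensures each step of this fusion admits a rational extension through the prescribed antichain node; the resulting branches will, however, typically have irrational suprema.

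If $X$ is stationary, the proof of Theorem \ref{thm:red1} carries over unchanged: for any maximal antichain $A$ of $T^X$ there is some $\beta \in X$ at which $A\cap T^X_{<\beta}\in N_\beta$ is maximal in $T^X_{<\beta}$, and the sealing at $\beta$ forces $A\subs T^X_{<\beta}$, whence $T^X$ is Suslin. If $X$ is non-stationary, fix a club $C\subs \omg\setm X$ of limit ordinals. Every $\beta\in C$ is a non-$X$ limit stage, so every $t\in T^X_\beta$ has $\sup t\in \mb Q$, and $\sup : T^X\uhr C \to \mb Q$ is strictly order-preserving (if $s<t$ in $T^X\uhr C$ then $t$ extends $s$ by rationals above $\sup s$, so $\sup s<\sup t$). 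The rational fibres of $\sup$ partition $T^X\uhr C$ into countably many antichains, so $T^X\uhr C$ is special; call the resulting specialization $f_C : T^X\uhr C \to \omega$. A standard gluing lifts this to a specialization of all of $T^X$: with $C=\{\gamma_\alpha:\alpha<\omg\}$, each interval subtree $T^X\uhr[\gamma_\alpha,\gamma_{\alpha+1})$ is countable and admits some injection $h_\alpha$ into $\omega$, and the map $t\mapsto (f_C(t_{\gamma_{\alpha(t)}}), h_{\alpha(t)}(t))$, where $\alpha(t)$ is the interval containing $\htt(t)$ and $t_{\gamma_{\alpha(t)}}$ is the ancestor of $t$ at the bottom of that interval, is a specialization into $\omega\times\omega\cong\omega$. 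A quick case check on $s<t$ according to whether $\alpha(s)=\alpha(t)$ verifies this. Property (1) is immediate since the construction at level $\beta$ depends only on $T^X_{<\beta}$ and on whether $\beta\in X$.

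The main obstacle is reconciling the $\mb Q$-sequence framework with the Suslin fusion at stages $\beta\in X$: one must keep the constructed branches strictly increasing and bounded throughout the fusion while still routing them through arbitrary antichain elements. Density of $\mb Q$ makes each individual step routine, but the supremum of the completed branch is typically irrational, which is harmless since such "irrational-sup" nodes live only at levels in $X$ and, in the non-stationary case, are absorbed into the countable interval subtrees appearing in the gluing argument above.
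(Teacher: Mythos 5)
There is a fatal structural obstruction to your approach. By hard-wiring the rational structure into the nodes -- every node is a strictly increasing bounded sequence in $\mb Q$, and every successor step adds an entry strictly above the predecessor's supremum -- you make the map $t\mapsto \sup t$ a \emph{strictly} increasing function from $T^X$ into $\mb R$: if $s<_T t$ then the first entry of $t$ beyond $s$ already exceeds $\sup s$. But no $\aleph_1$-tree admitting a strictly increasing real-valued map can be Suslin: for each successor-level node $t$ with immediate predecessor $t^-$ pick a rational $q_t$ with $\sup t^- < q_t < \sup t$; uncountably many successor nodes share the same $q$, and any two such nodes are incomparable (if $s<_T t$ and $q_s=q_t=q$ then $\sup s \leq \sup t^- < q < \sup s$, a contradiction), so $T^X$ always contains an uncountable antichain. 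Hence clause (2) fails for every stationary $X$, and your claim that the sealing argument of Theorem \ref{thm:red1} ``carries over unchanged'' cannot be right: running that argument against the maximal antichain extending the pigeonhole antichain shows that the construction itself cannot be completed as specified -- the antichain-sealing requirement at limits in $X$ and the rational-sequence format (with pruning and rational-sup interpolation at limits outside $X$) are jointly unsatisfiable. A secondary but related gap is the ``canonical strictly-increasing interpolation with supremum exactly $q$'' at limits $\beta\notin X$: such a node must be a cofinal branch through $T^X_{<\beta}$, and after sealed levels in $X$ (whose new nodes have uncontrolled, typically irrational suprema) nothing guarantees that branches through a given node realizing a prescribed rational supremum, or even suprema arbitrarily close to $\sup s$, exist; this density property would have to be maintained inductively, and it is precisely what the sealing destroys.

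This is exactly the difficulty the paper's proof is built to avoid: there the tree is a plain subtree of $2^{<\omg}$ with no order-theoretic link to $\mb Q$ built into its nodes, and the $\mb Q$-structure is kept \emph{external}, as ``specializing pairs'' $(\varphi,\delta)$ -- monotone maps into $\mb Q$ together with a promise function $\delta$ whose condition (iii) quantitatively guarantees extendibility through limit levels. At limits outside $X$ one extends every specializing pair lying in $N_\beta$ (Lemmas \ref{lm:newnode} and \ref{lm:ext2}), and when $X$ is non-stationary the oracle coherence glues these into a global specializing pair; when $X$ is stationary nothing forces any real-valued strictly increasing map on the tree, so the sealing can genuinely produce a Suslin tree. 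If you want to salvage your idea, you would have to replace the internal $\sigma\mb Q$-format by some device of this external kind; as written, the construction cannot satisfy (2).
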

 
 \begin{tproof}
 The idea is very similar: we build $T^X$ level by level and aim for a Suslin tree at stages $\beta\in X$. However, if $\beta\in \omg\setm X$ then we shall try to make $T^X$ special. In fact, we will add the new level $T_\beta^X$ so that any nice enough monotone map $\varphi:T^X_{<\beta}\to \mb Q$ that is also in $N_\beta$ has an extension to $T_\beta^X$. We will need to make sure that any new node at level $\beta$ works simultaneously for all such specializing maps which inspires the definition of a \emph{specializing pair} below. Intuitively, we not just assign a rational number $\varphi(s)$ to a tree node $s$  but also a promise (in the form of a positive number $\delta(s)$) to keep all values $\varphi(t)$ close to $\varphi(s)$ whenever $t$ is above $s$. The details follow below.
 
Fix a  $\diamondsuit^+$-oracle $\bar N$. Given $X$, we construct the downward closed subtree $T^X\subs 2^{<\omg}$ so that $(T^X_\alpha)_{\alpha< \beta}\in N_{\beta+1}$ for all $\beta<\omg$. If $\beta$ is successor or if  $\beta\in X$ then, working in $N_{\beta+1}$, we repeat the construction in the previour theorem. We make sure that 
 \begin{enumerate}
  \item any $s\in T^X_{<\beta}$ has an extension in $T^X_\beta$, and
  \item for any $A\in N_\beta$ so that $A\subs T^X_{<\beta}$ is a maximal antichain, any  new element $t\in T^X_\beta$ is above a node in $A$.
 \end{enumerate}
 
 This will certainly make sure that $T^X$ is Suslin whenever $X$ is stationary. Let us turn to the construction when $\beta$ is a limit ordinal from $\omg\setm X$.
 
 First, a new definition. Given any tree $T$ of height $\beta\leq \omg$, we say that  $(\varphi,\delta)$ is a specializing pair on $T$ if
 \begin{enumerate}[(i)]
     \item $\varphi:T\to \mb Q$ is monotone (in particular, $T$ is special),
     \item if $s<t\in T$ then $|\varphi(s)-\varphi(t)|<\delta(s)$, and
     \item if $\alpha<\beta<\htt(T)$, $s\in T_\alpha$ and $\Delta>0$ then there is some $t\in T_\beta$ above $s$ so that $$|\varphi(s)-\varphi(t)|<\Delta \textmd{ and } \delta(t)<\Delta.$$
 \end{enumerate}
 Our first goal is the following: given a countable tree $T$  of limit height $\beta$ and a countable family $\Phi$ of specializing pairs for $T$, we show that there is a cofinal branch $b$ through $T$ so that any $(\varphi,\delta)\in \Phi$ can be extended to $t^*=\cup b$.
 
 \begin{lemma}\label{lm:newnode} Suppose that $T\subset \oo^{<\omg}$ is a countable tree  of limit height $\beta$ and $\Phi$ is a countable family of specializing pairs for $T$. Fix some $s\in T$, $(\varphi^*,\delta^*)\in \Phi$ and $\Delta>0$. Then there is a cofinal, downward closed branch $b\subset T$ containing $s$ so that 
 $$\sup_{s\leq t\in b}|\varphi^*(s)-\varphi^*(t)|<\Delta$$ and for any $t'\in b^\downarrow$ and $(\varphi,\delta)\in \Phi$,
     $$\sup_{t'\leq t\in b}|\varphi(t')-\varphi(t)|<\delta(s').$$
     
 \end{lemma}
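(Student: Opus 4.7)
The plan is to build an increasing sequence $s=t_{-1}<t_0<t_1<\cdots$ cofinal in $T$, with $t_n\in T_{\beta_n}$ for a fixed cofinal sequence $(\beta_n)_{n<\omega}$ in $\beta$, and to let $b$ be its downward closure. I enumerate $\Phi=\{(\varphi_k,\delta_k):k<\omega\}$ with $(\varphi_0,\delta_0)=(\varphi^*,\delta^*)$ and fix a schedule $f\colon\omega\to\omega$ with $f(0)=0$ that hits every $k$ infinitely often. At stage $0$ I use (iii) with $(\varphi^*,\delta^*)$ to obtain $t_0\in T_{\beta_0}$ above $s$ satisfying $|\varphi^*(s)-\varphi^*(t_0)|<\Delta/4$ and $\delta^*(t_0)<\Delta/4$; then (ii) at $t_0$ together with the strict monotonicity of $\varphi^*$ along chains forces $\sup_{s\le t\in b}|\varphi^*(s)-\varphi^*(t)|<\Delta/2<\Delta$. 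At stage $n\ge 1$ with $k=f(n)$, the strict inequality in (ii) furnishes a positive slack
\[
\eta_i:=\delta_k(t_i)-|\varphi_k(t_i)-\varphi_k(t_{n-1})|>0\qquad(-1\le i\le n-1);
\]
I set $\eta^*:=\min_i\eta_i>0$ and use (iii) with $(\varphi_k,\delta_k)$ to pick $t_n\in T_{\beta_n}$ above $t_{n-1}$ with $|\varphi_k(t_{n-1})-\varphi_k(t_n)|<\eta^*/4$ and $\delta_k(t_n)<\eta^*/4$.

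To verify the sup condition for a pair $(\varphi_k,\delta_k)$ at a node $t'=t_i\in b$, I pick any $n>i$ with $f(n)=k$. A triangle-inequality estimate using the two stage-$n$ bounds together with (ii) applied first at $t_i<t_{n-1}$ and then at $t_n<t_j$ for $j>n$ yields the strict bound $|\varphi_k(t_i)-\varphi_k(t_j)|<\delta_k(t_i)-\eta_i/2$ for every $j>n$. The finitely many $j$ with $i<j\le n$ give a finite strict maximum below $\delta_k(t_i)$ directly from (ii), and any intermediate $t\in b$ with $t_j<t<t_{j+1}$ is controlled via strict monotonicity of $\varphi_k$ along the chain, which squeezes $\varphi_k(t)$ between $\varphi_k(t_j)$ and $\varphi_k(t_{j+1})$ and so bounds $|\varphi_k(t_i)-\varphi_k(t)|$ by the larger of the two endpoint distances. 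Combining all three cases gives $\sup_{t'\le t\in b}|\varphi_k(t')-\varphi_k(t)|<\delta_k(t')$, and $b$ is cofinal, downward closed, and contains $s=t_{-1}$ by construction.

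The main obstacle is that (iii) only controls a single specializing pair at a time, so a naive simultaneous treatment of all relevant pairs at stage $n$ collapses, by telescoping, to only $\le\delta_k(t')$ in the limit, which is not strict enough. The slack-recording trick is precisely what upgrades this to a strict inequality: the strict form of (ii) at the already-chosen $t_{n-1}$ produces a positive gap $\eta_i$, and using (iii) to force $\delta_k(t_n)$ strictly below a small fraction of this gap confines all future values $\varphi_k(t)$ to a ball strictly inside the open ball of radius $\delta_k(t_i)$ around $\varphi_k(t_i)$. The round-robin schedule $f$ ensures this trick is applied to every pair $(\varphi_k,\delta_k)$ at some stage $n>i$, so the strict bound holds uniformly over $\Phi$ and over all $t'\in b$.
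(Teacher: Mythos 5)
Your overall strategy (round-robin scheduling plus recording positive slack and forcing the next node's $\varphi$-increment and $\delta$-value well below it) is the right one, but your verification only establishes the strict bound at the selected nodes $t_i$, whereas the lemma demands it for \emph{every} $t'$ in the downward closed branch $b$ --- in particular at the nodes of $b$ lying strictly between consecutive $t_j$ and $t_{j+1}$, and at the nodes below $s$. In your argument such a $t'$ only ever appears as a \emph{target} $t$ of the supremum (handled by monotonicity); it is never treated as a \emph{source}: its slack $\delta_k(t')-|\varphi_k(t')-\varphi_k(\,\cdot\,)|$ is not recorded at any stage, since your $\eta_i$ are computed only at $t_{-1},t_0,\dots,t_{n-1}$. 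For such a $t'$ all you get is the pointwise inequality $|\varphi_k(t')-\varphi_k(t)|<\delta_k(t')$ for each $t$, and in the limit this yields only $\sup_{t'\le t\in b}|\varphi_k(t')-\varphi_k(t)|\le\delta_k(t')$ --- exactly the non-strict estimate that your ``main obstacle'' paragraph correctly identifies as useless. Strictness at these nodes is not optional: in the intended application one puts $t^*=\cup b$ and $\varphi(t^*)=\sup_{t\in b}\varphi(t)$, and clause (ii) of a specializing pair must then hold between $t^*$ and every node below it, i.e.\ every element of $b$, not just the cofinal sequence you chose (this is what Lemma \ref{lm:ext2} needs).

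The paper's proof avoids this by enumerating not $\Phi$ but $\Phi\times T$: it lists all pairs $(\mathbf{p}_n,t_n)$ with $t_n$ ranging over the whole countable tree $T$, each appearing infinitely often, and at the step producing $s_{n+1}$ it checks whether the scheduled node $t_n$ lies below the current top $s_n$; if so it records the remaining slack $\tilde\Delta=\delta_n(t_n)-|\varphi_n(t_n)-\varphi_n(s_n)|>0$ and chooses $s_{n+1}$ with $\varphi_n$-increment and $\delta_n(s_{n+1})$ below $\tilde\Delta/2$. Since every node that ends up in $b$ lies below all sufficiently late $s_n$ and is scheduled with every pair infinitely often, its slack is eventually locked in, and the strict bound holds uniformly on $b$. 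Your proof is repaired by exactly this change: run the schedule over $\Phi\times T$ (you cannot schedule over $b$, since $b$ is not known in advance) and otherwise keep your estimates, which then go through essentially verbatim.
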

Indeed, if the lemma holds and we set  $t^*=\cup b$ and define $\varphi(t^*)=\sup_{t\in b}\varphi(t)$ for $(\varphi,\delta)\in \Phi$ then $(\varphi,\delta)$ still satisfies (i) and (ii) from the definition of a specializing pair on the extra node $t^*$. Moreover, if we set $\delta^*(t^*)=\Delta/2$ then for this particular $s$ and $\Delta$, we made sure that condition (iii)  for $(\varphi^*,\delta^*)$ is witnessed by $t^*$. By repeating the procedure of Lemma \ref{lm:newnode} for all the elements of $\Phi$ with all possible rational $\Delta>0$, we get the following.

 \begin{lemma}\label{lm:ext2} Suppose that $T\subset \oo^{<\omg}$ is a countable tree  of limit height $\beta$ and $\Phi$ is a countable family of specializing pairs for $T$. Then $T$ has a pruned end-extension $T^*$ of height $\beta+1$ so that any $(\varphi,\delta)\in \Phi$ has an extension $(\bar \varphi,\bar \delta)$ that is a specializing pair on $T^*$.
 \end{lemma}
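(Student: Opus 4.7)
The plan is to assemble $T^*$ by adding one new top-level node for each branch produced by Lemma \ref{lm:newnode}, applied to every possible triple of data we need to handle.

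First, enumerate the countable set of triples $(s,(\varphi^*,\delta^*),\Delta)$ with $s\in T$, $(\varphi^*,\delta^*)\in \Phi$, and $\Delta\in \mathbb Q^+$. For each such triple invoke Lemma \ref{lm:newnode} to obtain a cofinal downward-closed branch $b=b(s,(\varphi^*,\delta^*),\Delta)$ through $T$ containing $s$, along which $\varphi^*$ oscillates by less than $\Delta$ above $s$ and each $(\varphi,\delta)\in\Phi$ satisfies $\sup_{t'\le t\in b}|\varphi(t')-\varphi(t)|<\delta(t')$ for every $t'\in b$. Form $T^*$ by placing a single new node $t_b^*$ of height $\beta$ above each such branch $b$. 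Since the enumeration contains a triple $(s,\cdot,\cdot)$ for every $s\in T$, the extension $T^*$ is pruned of height $\beta+1$.

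Next, extend each pair $(\varphi,\delta)\in\Phi$ to $T^*$. For each branch $b$ and each $(\varphi,\delta)\in\Phi$, the monotone bounded values $(\varphi(t))_{t\in b}$ have a limit $L(b,\varphi)\in\mathbb R$; pick a rational $\bar\varphi(t^*_b)$ strictly above every $\varphi(t)$ with $t\in b$ and close enough to $L(b,\varphi)$ to guarantee $|\varphi(t')-\bar\varphi(t_b^*)|<\delta(t')$ for every $t'\in b$. The slack for such a choice is provided by the strict inequality $\sup_{t'\le t\in b}|\varphi(t')-\varphi(t)|<\delta(t')$ coming from Lemma \ref{lm:newnode}. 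Finally, if the branch $b$ was produced by the triple $(s,(\varphi^*,\delta^*),\Delta)$, set $\bar\delta^*(t_b^*)$ to be a rational strictly less than $\Delta/2$; for all other pairs, set $\bar\delta(t_b^*)$ to any positive rational.

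With these definitions, conditions (i) and (ii) in the definition of a specializing pair hold on $T^*$ by construction, and condition (iii) is witnessed by $t_b^*$ precisely because our enumeration was exhaustive: given $\alpha<\beta$, $s\in T_\alpha$, $(\varphi^*,\delta^*)\in\Phi$, and $\Delta>0$ rational, the node $t_b^*$ associated to the triple $(s,(\varphi^*,\delta^*),\Delta)$ satisfies $|\varphi^*(s)-\bar\varphi^*(t_b^*)|<\Delta$ and $\bar\delta^*(t_b^*)<\Delta$. Rational $\Delta$'s suffice because condition (iii) is monotone in $\Delta$. The main obstacle is the bookkeeping in the second step: we must pick the rational value $\bar\varphi(t_b^*)$ tight enough to preserve $|\varphi(t')-\bar\varphi(t_b^*)|<\delta(t')$ simultaneously for every $t'\in b$, relying on the fact that Lemma \ref{lm:newnode} delivers a \emph{strict} bound on the supremum along the branch, not merely a bound by $\delta(t')$ in the limit.
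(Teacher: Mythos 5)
Your overall scheme is exactly the paper's: enumerate the (countably many) triples $(s,(\varphi^*,\delta^*),\Delta)$ with $\Delta$ rational, apply Lemma \ref{lm:newnode} to each, cap every resulting branch $b$ by the node $\cup b$ at level $\beta$, extend each pair of $\Phi$ to the new nodes, set the relevant $\bar\delta$-value below $\Delta/2$, and verify (i)--(iii) by exhaustiveness of the enumeration (your remark that rational $\Delta$ suffice is also how the paper argues). So in structure the proposal is the intended proof.

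The one genuine gap is precisely the step you flag as ``the main obstacle''. Write $L=\sup_{t\in b}\varphi(t)$. Strict monotonicity of $\bar\varphi$ at $t^*_b$ forces $\bar\varphi(t^*_b)\geq L$, while condition (ii) at the pairs $(t',t^*_b)$ forces $\bar\varphi(t^*_b)<\varphi(t')+\delta(t')$ for every $t'\in b$. The inequalities $\sup_{t'\leq t\in b}|\varphi(t')-\varphi(t)|<\delta(t')$ from Lemma \ref{lm:newnode} are strict only \emph{pointwise}: they say exactly that $L<\varphi(t')+\delta(t')$ for each $t'$, i.e.\ that the value $L$ itself is admissible, but they give no uniform gap, so $\inf_{t'\in b}\bigl(\varphi(t')+\delta(t')\bigr)$ may equal $L$. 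In fact, for branches built as in the proof of Lemma \ref{lm:newnode} this infimum \emph{always} equals $L$, since each pair is acted on cofinally often and the construction drives $\delta(t')\to 0$ while $\varphi(t')\to L$ along those nodes; hence no rational strictly above $L$ can satisfy (ii), and your ``pick a rational close enough to $L$'' is not available when $L\notin\mathbb{Q}$. The paper's choice is the only one compatible with (i) and (ii), namely $\bar\varphi(t^*_b)=L$ itself, for which both conditions follow at once from the pointwise strict bounds; the residual wrinkle that this supremum of rationals need not be rational (while the definition demands $\varphi:T\to\mathbb{Q}$) is present in the paper's own sketch as well and has to be repaired separately (e.g.\ by strengthening Lemma \ref{lm:newnode} to reserve a rational target), not by the slack argument you give. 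A second, minor point: since in $\oo^{<\omg}$ there is only one possible node above a given branch, if the same branch is returned for infinitely many triples of one pair with $\Delta\to 0$ you cannot set $\bar\delta(t^*_b)$ below all of them, so you should arrange distinct branches for distinct triples (or argue the (iii)-witnesses separately); this is routine bookkeeping, but worth a sentence.
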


\begin{cproof}[Proof of Lemma \ref{lm:newnode}] Given $s<t\in T$ and ${\bf p}\in \Phi$, let $\Delta_{\bf p}(s,t)=\delta(s)-|\varphi(s)-\varphi(t)|$. This measures how much slack we have after jumping from $s$ to $t$. List all ${\bf p}\in \Phi$ and $t\in T$ as $({\bf p}_n,t_n)$, each infinitely often. 
We define $s<s_0<s_1<s_2<\dots$ so that $s_n\in T_{\alpha_n}$ for some fixed $(\alpha_n)_{n\in \oo}$ cofinal sequence in $\alpha$. 
First, we pick $s_0$ so that $$\varphi^*(s_0)-\varphi^*(s)<\Delta/2$$ and $$\delta^*(s_0)<\Delta/2.$$ This can be done by (iii). Moreover, note that no matter how we pick $t$ above $s_0$, we will always have $\varphi^*(t)-\varphi^*(s_0)<\Delta/2$ and so for any cofinal branch $b$ above $s_0$, $$\sup_{s\leq t\in b}|\varphi^*(t)-\varphi^*(s)|<\Delta$$ by the triangle inequality.

Given $s_n$, we pick $s_{n+1}$ as follows: look at $({\bf p}_n,t_n)$ and assume $t_n< s_n$ (if the latter fails, pick $s_{n+1}$ arbitrarily). Now, look at $\tilde \Delta=\Delta_{{\bf p}_n}(t_n,s_n)$ and pick $s_{n+1}$
so that $$\varphi_n(s_{n+1})-\varphi_n(s_n)<\tilde \Delta/2$$ and $$\delta_n(s_{n+1})<\tilde \Delta/2$$ where ${\bf p}_n=(\varphi_n,\delta_n)$. This is again possible by (iii). As before, for any $t$ above $s_{n+1}$, we will always have $\varphi_n(t)-\varphi_n(s_{n+1})<\tilde \Delta/2$ and so for any cofinal branch $b$ above $s_{n+1}$, $$\sup_{s_{n+1}\leq t\in b}\varphi_n(t)-\varphi_n(t_n)<\delta_n(t_n)$$ by the triangle inequality and unwrapping the definition of $\tilde \Delta$.

The final branch $b$ is given by the downward closure of $(s_n)_{n<\oo}$. Since any  $t'\in b$ and specializing pair ${\bf p}\in \Phi$ was considered infinitely often during the construction, we clearly satisfied the requirements.

\end{cproof}

Finally, we can describe what happens in the construction of $T^X_\beta$ at limit steps $\beta\in \omg\setm X$. Working in $N_{\beta+1}$, we consider the tree $T^X_{<\beta}$ and $\Phi_\beta=\{(\varphi,\delta)\in N_\beta:(\varphi,\delta) $ is a specializing pair for $T^X_{<\beta}\}$. Now, applying Lemma \ref{lm:ext2}, we add a new level to $T^X_{<\beta}$ so that any specializing pair from $\Phi_\beta$ extends to $T^X_{\leq \beta}$. 

This ends the construction of $T^X$ and we are left to prove the following.

\begin{clm}
If $X$ is non-stationary then $T^X$ is a special Aronszajn tree.
\end{clm}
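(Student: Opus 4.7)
The goal is to exhibit a specializing pair $(\varphi, \delta)$ on all of $T^X$; clause (i) then shows $T^X$ is special, and since no uncountable chain embeds into $\mathbb{Q}$, $T^X$ will automatically have no uncountable branch and be Aronszajn.

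The plan is to ride the non-stationarity of $X$ together with the $\diamondsuit^+$-oracle $\bar N$ to construct $(\varphi, \delta)$ canonically. First, I fix a club $B$ disjoint from $X$ and apply the $\diamondsuit^+$-property of $\bar N$ to both $B$ and $X$ to extract a club $D \subseteq B$ with $B \cap \beta, X \cap \beta \in N_\beta$ for every $\beta \in D$. Combined with the oracle clause $(N_\gamma)_{\gamma < \beta} \in N_\beta$, this forces $T^X \upharpoonright \beta \in N_\beta$ whenever $\beta \in D$, since the recursion defining $T^X$ uses only $X$ and $\bar N$. I then build $(\varphi, \delta)$ by transfinite recursion on levels $\beta < \omega_1$, always taking the $\prec$-minimal specializing extension, so that the canonical initial segment $(\varphi, \delta) \upharpoonright T^X_{<\beta}$ lies in $N_\beta$ for every $\beta \in D$.

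At successor levels the extension is routine: for each new node $s \smf i$ set $\varphi(s \smf i) = \varphi(s) + i \cdot \delta(s)/4$ and $\delta(s \smf i) = \delta(s)/4$. At a limit $\beta \in D$, we have $\beta \notin X$, so $T^X_\beta$ was built via Lemma \ref{lm:ext2} with $\Phi_\beta$ containing our canonical pair, and $(\varphi, \delta)$ extends to $T^X_{\leq \beta}$ immediately. At a limit $\beta \in X$ captured by $\bar N$ (which, by invoking $\diamondsuit^+$ for more parameters and enlarging $D$, I can assume covers every $\beta$ where the argument is needed), Suslin-sealing was used, and I would extend as follows: for each rational $\varepsilon > 0$, the set $A_\varepsilon \subseteq T^X_{<\beta}$ of $\prec$-minimal $s$ with $\delta(s) < \varepsilon$ is a maximal antichain by clause (iii), and $A_\varepsilon \in N_\beta$ since $(\varphi, \delta) \in N_\beta$. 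Suslin-sealing forces every $t \in T^X_\beta$ to sit above some element of $A_\varepsilon$, so $\delta \to 0$ cofinally along $b_t$ and $\varphi(b_t)$ is Cauchy with limit $L_t$. The halving slack already built into Lemma \ref{lm:newnode} will ensure $L_t < \varphi(s) + \delta(s)$ strictly for every $s \in b_t$, so I can choose a rational $\varphi(t) \in (L_t, \min_{s \in b_t}(\varphi(s) + \delta(s)))$ and a small enough $\delta(t)$ to preserve clauses (i)--(iii).

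The main obstacle is precisely this Suslin-sealing case: it hinges on $(\varphi, \delta) \in N_\beta$ so that $A_\varepsilon \in N_\beta$ and is thus crossed by the Suslin-sealed nodes. My plan is to arrange the capturing club large enough, by applying $\diamondsuit^+$ simultaneously to $X$, $B$, and the parameters driving the canonical recursion, so that every limit stage where an obstruction could occur is captured. Limit stages $\beta \notin X$ lying outside $D$ can be handled by Lemma \ref{lm:ext2} together with canonical monotone interpolation over the countable subtree strictly between successive $D$-stages, which is itself trivially special. Once $(\varphi, \delta)$ has been defined on the whole of $T^X$, clause (i) of the specializing-pair definition delivers the desired special Aronszajn tree.
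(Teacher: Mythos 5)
Your overall skeleton (a canonically defined pair, captured by the oracle on a club, with Lemma \ref{lm:ext2} doing the work at captured limit levels outside $X$) is the paper's, but the step you add at the sealed levels $\beta\in X$ does not work, and this is precisely the heart of the claim. Your own antichain argument defeats you: since every $t\in T^X_\beta$ lies above a member of each $A_\varepsilon$, along $b_t$ you get nodes $s$ with $\delta(s)$ arbitrarily small while $\varphi(s)\le L_t$, so $\inf_{s\in b_t}\bigl(\varphi(s)+\delta(s)\bigr)=L_t$; hence the interval $\bigl(L_t,\ \min_{s\in b_t}(\varphi(s)+\delta(s))\bigr)$ from which you propose to choose $\varphi(t)$ is empty (the minimum need not even exist). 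Any rational $q>L_t$ violates clause (ii) at all sufficiently high $s\in b_t$, where $\delta(s)<q-\varphi(s)$, and the only remaining candidate $q=L_t$ is available only if $L_t$ happens to be rational, which nothing in the construction arranges. The appeal to ``the halving slack already built into Lemma \ref{lm:newnode}'' is a misattribution: levels $\beta\in X$ are built by the plain antichain-sealing copied from Theorem \ref{thm:red1}, not by Lemma \ref{lm:newnode}, so the capped branches carry no slack guarantee at all. Worse, arranging that your pair is captured in $N_\beta$ at a sealed level is exactly what one must avoid: sealing against all maximal antichains of $N_\beta$ is the device that destroys specializing data visible to $N_\beta$ (this is how $T^X$ becomes Suslin for stationary $X$), and your crossing mechanism nowhere uses non-stationarity, so if it were sound it would produce a specializing pair on $T^X$ for stationary $X$ as well, contradicting clause (2) of the theorem.

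Contrast this with the paper's proof: the club $C$ is chosen inside $\omg\setminus X$, and the canonical pair $(\varphi_\alpha,\delta_\alpha)$ is required to be reconstructible inside $N_\beta$ only at points $\beta\in C$; there $\beta\notin X$, the level was built via Lemma \ref{lm:ext2}, and membership in $\Phi_\beta$ yields the extension. The sealed levels are met only strictly between consecutive $C$-points, where the partial pair is deliberately \emph{not} an element of $N_\beta$, so the sealing does not force $\delta\to 0$ along the capped branches; since the entire countable block up to the next $C$-point, including which branches get capped at which limit levels, is already determined before the pair is extended there, one can thread the values so that a nontrivial rational window survives along every branch that will be capped, routing the small-$\delta$ witnesses required by clause (iii) to nodes off those branches. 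So the repair is not ``capture every dangerous level'' but the opposite: stay invisible to $N_\beta$ between $C$-points and use foreknowledge of the block; as written, your treatment of the $\beta\in X$ levels is a genuine gap.
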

\begin{cproof}
In fact, we prove that $T^X$ has a specializing pair. By our assumption on $X$, we can find a club $C\subset \omg\setm X$ so that for any $\beta\in C$, $X\cap \beta,C\cap \beta\in N_\beta$. Let $\{\beta_\alpha:\alpha<\omg\}$ be the increasing enumeration of $C$ and we define an $(\varphi_\alpha,\delta_\alpha)$ for $\alpha<\omg$ so that
\begin{enumerate}
    \item $(\varphi_\alpha,\delta_\alpha)\in N_{\beta_\alpha+1}$ is a specializing pair on $T^X_{<\beta_\alpha}$ uniquely definable from $X\cap \beta_\alpha,C\cap \beta_\alpha,\bar N\uhr \beta_\alpha$,
    \item for $\alpha<\alpha'<\omg$, $(\varphi_{\alpha'},\delta_{\alpha'})$ extends $(\varphi_\alpha,\delta_\alpha)$.
\end{enumerate}
As before, $N_{\beta_{\alpha'}}$ has all the information to reconstruct the sequence $((\varphi_\alpha,\delta_\alpha))_{\alpha<\alpha'}$ and so $$(\bigcup_{\alpha<\alpha'}\varphi_\alpha,\bigcup_{\alpha<\alpha'}\delta_\alpha)\in \Phi_{\beta_{\alpha'}}.$$ Since $\beta_{\alpha'}\notin X$, we made sure that this specializing pair has an extension to level $T_{\beta_{\alpha'}}^X$ which gives $(\varphi_{\alpha'},\delta_{\alpha'})$.
\end{cproof}

 \end{tproof}

  \begin{cor}If $V=L$ then $\satree$ is $\siii$-complete.
  \end{cor}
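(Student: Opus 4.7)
My plan is to derive this corollary directly from the theorem just proved, together with Theorem \ref{thm:fok} and Observation \ref{obs:basic}. First, I would note that under $V=L$ we have $\diamondsuit^+$, so the reduction $X \mapsto T^X$ from subsets of $\omg$ to $\aleph_1$-subtrees of $2^{<\omg}$ is available. Condition (1) of the theorem — that $T^X\uhr \alpha$ is determined by $X\cap \alpha$ — is exactly the statement that the map is continuous in the higher Baire topology (after identifying $2^{<\omg}$ with $\omg$ in a uniform way under CH).

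The key observation is that this map sends non-stationary sets precisely to $\satree$: when $X$ is non-stationary, clause (3) of the theorem gives $T^X \in \satree$; when $X$ is stationary, clause (2) gives that $T^X$ is Suslin, hence Aronszajn but not special (a special Aronszajn tree is the union of countably many antichains, each countable, so it cannot be Suslin). Therefore $T^X \in \satree$ if and only if $X \in \omg^\omg \setm \stat$.

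By Theorem \ref{thm:fok}, under $V=L$ the set $\stat$ is $\piii$-complete, so its complement $\omg^\omg \setm \stat$ is $\siii$-complete. Composing the continuous reduction witnessing $\siii$-completeness of $\omg^\omg \setm \stat$ with our map $X \mapsto T^X$, we see that every $\siii$ set continuously reduces to $\satree$, i.e.\ $\satree$ is $\siii$-hard. Since $\satree \in \siii$ by Observation \ref{obs:basic}(3), it is $\siii$-complete.

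There is essentially no obstacle here — all of the work has already been done in the preceding theorem and in Theorem \ref{thm:fok}. The only minor point to verify is the continuity of $X \mapsto T^X$ as a map into $\omg^\omg$, but this is an immediate consequence of condition (1) together with the standard identification of codes for downward closed $\aleph_1$-subtrees of $2^{<\omg}$ with elements of $\omg^\omg$ (which is continuous since basic open sets in both spaces are determined by countable initial data).
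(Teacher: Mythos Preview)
Your argument is correct and matches the paper's intended derivation: the corollary is stated without proof precisely because it follows immediately from the preceding theorem together with Theorem~\ref{thm:fok} and Observation~\ref{obs:basic}(3), exactly as you outline. One tiny wording quibble: in your parenthetical, the antichains in a special tree are not countable by definition --- rather, if the tree were \emph{also} Suslin each antichain would be countable, forcing the tree itself to be countable; but your conclusion that Suslin implies non-special is of course right.
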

  
\section{Kurepa trees}\label{sec:kurepa}

Our goal in this section is to show the following.

\begin{thm}$(V=L)$ The set $\ktree$ of all Kurepa trees is $\Pi^1_2$-complete.
\end{thm}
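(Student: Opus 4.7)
The result has two halves: checking $\ktree\in\Pi^1_2$, and, under $V=L$, reducing an arbitrary $\Pi^1_2$ set to $\ktree$. For the upper bound, I would note that once the Borel condition ``$T$ is a regular $\aleph_1$-tree'' is fixed, the Kurepa condition can be rephrased as: for every $F:\omg\to\omg^\omg$ there is $b\in\omg^\omg$ coding an uncountable branch through $T$ with $b\notin \ran(F)$. The matrix is Borel (the intersection of the closed condition ``$b$ is a branch of $T$'' with each closed condition ``$b\neq F(\xi)$''), so the whole formula is $\forall F\,\exists b$ over a Borel predicate and hence $\Pi^1_2$.

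For hardness, fix an arbitrary $\Pi^1_2$ set $\mc S\subseteq\omg^\omg$ and write $X\in\mc S\iff \forall Y\,\psi(X,Y)$ with $\psi\in\siii$. Under $V=L$ there is the canonical $\Sigma^1_2$-good well-order of $\omg^\omg$ of order type $\omega_2$, providing a uniformly $L$-definable enumeration $\omg^\omg=\{Y_\beta:\beta<\omega_2\}$ whose initial segments appear cofinally in the $L$-hierarchy. The plan is to construct, from $X$, a regular $\aleph_1$-tree $T^X\subseteq 2^{<\omg}$ whose uncountable branches carry ``colours'' indexed by $\beta<\omega_2$ with the property that a branch of colour $\beta$ exists iff $\psi(X,Y_\beta)$ holds and, crucially, every uncountable branch has a well-defined colour. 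Granting this, $T^X$ has $\ge\aleph_2$ uncountable branches iff every colour is realised iff $\forall\beta\,\psi(X,Y_\beta)$ iff $X\in\mc S$, so $T^X\in\ktree$ iff $X\in\mc S$.

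The construction itself follows the template of Theorem \ref{thm:red1}: fix a $\diamondsuit^+$-oracle $\bar N=(N_\alpha)_{\alpha<\omg}$ (available since $V=L$ yields $\diamondsuit^+$) and build $T^X$ level by level with $(T^X_\alpha)_{\alpha<\beta}\in N_{\beta+1}$, ensuring continuity of $X\mapsto T^X$. Successor levels are binary extensions. At a limit $\beta<\omg$, use $\bar N$ to identify the countably many colours $\gamma$ whose $L$-code is visible in $N_\beta$, and apply the standard tree-of-attempts representation of $\siii$: a cofinal branch of $T^X_{<\beta}$ is permitted to extend to level $\beta$ only if it codes an initial segment of a $\siii$-witness to $\psi(X,Y_\gamma)$ for some visible $\gamma$. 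The $\diamondsuit^+$ reflection guarantees that any genuine witness to $\psi(X,Y_\gamma)$ is detected on a club of stages, so if $\psi(X,Y_\gamma)$ holds then at least one branch of colour $\gamma$ survives cofinally, while if $\psi(X,Y_\gamma)$ fails the attempts of colour $\gamma$ die at club-many levels, precluding any cofinal branch of colour $\gamma$.

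The principal obstacle is arranging that the \emph{cardinality} of the branch set matches the right-hand side of the equivalence. Unlike the Suslin reduction, which only needs a zero/one dichotomy (Suslin versus containing an uncountable branch), here the branch count must jump from $\le\aleph_1$ to $\aleph_2$, and this while squeezing $\omega_2$ possible colours through countable levels via the oracle. The subtle part is ensuring that \emph{no} uncountable branch of $T^X$ exists without a well-defined colour; stray colourless branches could push $T^X$ into $\ktree$ when $X\notin\mc S$. Sealing them off will likely require an antichain-style argument refining the one in Theorem \ref{thm:red1}, combined with a counting argument that bounds by $\aleph_1$ the number of surviving branches of colour below any fixed $\beta_0$ whenever all colours $\ge\beta_0$ are absent.
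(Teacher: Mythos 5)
Your upper-bound sketch is fine, but the hardness reduction has a fatal counting error at its core. You index the potential uncountable branches of $T^X$ by colours $\beta<\omega_2$ (one colour per $Y_\beta$ in the canonical enumeration) and arrange that a branch of colour $\beta$ exists iff $\psi(X,Y_\beta)$ holds. With that design the number of uncountable branches of $T^X$ measures \emph{how many} $\beta$ satisfy $\psi(X,Y_\beta)$, not whether \emph{all} of them do. Your claimed equivalence ``$T^X$ has $\ge\aleph_2$ branches iff every colour is realised'' is false: if $\psi(X,Y_\beta)$ fails for a single (or even $\aleph_1$-many) $\beta$ but holds for the remaining $\aleph_2$-many, then $T^X$ is Kurepa although $X\notin\mc S$. (Take $\psi(X,Y)$ to say $Y\neq Y_0$ for a fixed real $Y_0$: then $\mc S=\emptyset$, yet all colours except one are realised.) The sealing and counting devices you mention at the end address stray colourless branches and absent tail-colours, but they cannot repair this; no tree whose branch set is in bijection with $\{\beta:\psi(X,Y_\beta)\}$ can decide a universal statement by its cardinality. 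A secondary problem is that you also need each realised colour to contribute essentially one branch (a $\Sigma^1_1$ fact can have many witnesses, and $\aleph_2$-many witness-branches for a single colour would again make the tree Kurepa spuriously), and your construction does not enforce this.

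The paper's proof removes the difficulty \emph{before} building any tree, by a representation lemma: for a $\Pi^1_2$ set $A$ there are a $\Sigma_1$ formula $\phi$ and a parameter $P$ such that $X\in A$ iff $\sup\{i<\omega_2: L_{\omega_2}\models\phi(X,P,i)\}=\omega_2$, where $\phi(X,P,i)$ says that $L_i$ contains $X,P$ and \emph{locally verifies the entire universal statement} (using a $\Sigma_1$-definable bijection $f:\omega_2\to\omg^\omg$ over $L_{\omega_2}$, so that universal quantification over reals becomes universal quantification over ordinals, and reflection/upward absoluteness turns ``holds for all'' into ``holds cofinally often in $i$''). Only then is a tree built, and not by a $\diamondsuit^+$-oracle at all: one takes \emph{good triples} $(\alpha,i,\beta)$ with $L_\beta\models\alpha=\omg$ and $\beta$ minimal witnessing $\phi(X\cap\alpha,P\cap\alpha,i)$, orders them by condensation-style elementary embeddings, and lets $T_X$ consist of the associated functions restricted to countable ordinals. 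Condensation and the minimality of $\beta$ (Skolem hull of $\alpha\cup\{\alpha,X\cap\alpha,P\cap\alpha,i\}$ is all of $L_\beta$) give that uncountable branches correspond exactly, via direct limits, to ordinals $i<\omega_2$ with $L_{\omega_2}\models\phi(X,P,i)$; hence $T_X$ is Kurepa iff such $i$ are cofinal in $\omega_2$ iff $X\in A$. This is also how $\omega_2$-many ``colours'' pass through countable levels: a colour $i<\omega_2$ appears at countable stages only through its collapsed images $\bar\imath<\omg$ inside countable $L_{\bar\beta}$'s, which is machinery your oracle-based sketch does not supply. If you want to salvage your approach, you would have to replace colours-indexed-by-$Y_\beta$ with colours-indexed-by-verifying-levels of the $L$-hierarchy, at which point you are reconstructing the paper's argument.
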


We prove the above result through a series of lemmas. First, we will build on the following representation of $\Pi^1_2$ sets.

\begin{lemma}
If $A$ is a $\Pi^1_2$ subset of $\omg^\omg$ then for some 
$\Sigma_1$ formula $\phi$ and some parameter $P\in \omg^\omg$, the following are equivalent
\begin{enumerate}
    \item $X\in A$,
    \item $\sup \{i<\oo_2 : 
L_{\oo_2}[X]\models \phi(X,P,i)\}=\omega_2$.
\end{enumerate}
\end{lemma}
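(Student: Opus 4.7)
The plan is to follow the standard tree-of-attempts unpacking of $\Pi^1_2$ combined with the fact that under $V=L$ (the ambient hypothesis of this section) every subset of $\omg$ already sits in $L_{\omega_2}$, so the outer quantifier over $\omg^\omg$ and the inner existential quantifier over branches can both be transferred into $L_{\omega_2}[X]$.

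First, I would unwind the definition of $\Pi^1_2$: after the usual dualisation one obtains
$$X \in A \iff \forall Y \in \omg^\omg \; \exists Z \in \omg^\omg \; (X, Y, Z) \in C$$
for some $\omg$-closed $C\subset(\omg^\omg)^3$. Any such $C$ is represented by a tree $T_*\subset (\omg^{<\omg})^3$, and under CH this tree is coded as a single element $P \in \omg^\omg$. For each pair $(X,Y)$ form the section tree
$$T_{X,Y} = \bigl\{\sigma \in \omg^{<\omg} : (X \uhr |\sigma|,\, Y \uhr |\sigma|,\, \sigma) \in T_*\bigr\}.$$
Then $\exists Z\,(X,Y,Z)\in C$ is equivalent to $T_{X,Y}$ having a cofinal branch, so $X\in A$ if and only if $T_{X,Y}$ is ill-founded for every $Y$.

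Second, I would transfer this reformulation to $L_{\omega_2}[X]$. Under $V=L$ every $Y\subset \omg$ belongs to $L_{\omega_2}=L_{\omega_2}[X]$, and any cofinal branch of $T_{X,Y}$ (being a subset of $\omg$) appears at some constructibility stage below $\omega_2$. Both quantifiers are therefore absolute to $L_{\omega_2}[X]$, giving
$$X \in A \iff L_{\omega_2}[X] \models \forall Y\, \exists b\, \chi(X, Y, b, P),$$
where $\chi$ is the $\Delta_0$ sentence ``$b$ is a cofinal branch through $T_{X,Y}$''. Finally, define
$$\phi(X, P, i) := \exists M\bigl( M = L_i[X] \wedge \forall Y \in M\; \exists b \in M\; \chi(X, Y, b, P) \bigr),$$
which is $\Sigma_1$ because the existence of $M=L_i[X]$ is $\Sigma_1$ and, once $M$ is at hand, the remaining clause is bounded. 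If $X \in A$ then the map $Y \mapsto ( <_L\text{-least branch of } T_{X,Y})$ is a total $\Sigma_1$-definable function on $L_{\omega_2}[X]$, and the set of $i$ at which $L_i[X]$ is closed under it is a club in $\omega_2$, so $\phi$ holds unboundedly. Conversely, if $X\notin A$ then some $Y_0 \in L_{i_0}[X]$ fails to admit any branch in $L_{\omega_2}[X]$, and then $\phi(X,P,i)$ fails for every $i \geq i_0$, so $\{i:\phi\}$ is bounded.

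The delicate ingredient is the absoluteness step: one needs both the outer $Y$ and the witnessing branch $b$ to be captured inside $L_{\omega_2}[X]$. Under $V=L$ this is immediate from the $\Sigma_1$-definability of the constructible hierarchy; in a general ZFC setting a Shoenfield-style tree on $\omg \times \omg_2$ together with ranking arguments would be required, but the $V=L$ version is what the subsequent completeness proof for $\ktree$ in fact needs.
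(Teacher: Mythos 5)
Your proof is correct and takes essentially the same route as the paper: both rewrite the $\Pi^1_2$ set in $\forall Y\,\exists Z$ normal form, transfer this (using $V=L$, so that all relevant reals and witnesses lie in $L_{\omega_2}[X]$) to a first-order $\forall\exists$ statement over $L_{\omega_2}[X]$ with a parameter $P$ coding the matrix, let $\phi(X,P,i)$ say that the statement holds relativized to level $i$ of the hierarchy, and obtain the equivalence from a club of reflection/closure points in one direction and boundedness below a counterexample in the other. The only cosmetic difference is that you work with the closed-set tree and its cofinal branches together with the $<_L$-least-branch Skolem function, whereas the paper uses a Borel matrix, a $\Sigma_1$ truth definition $\psi$ over $L_{\omega_2}$, and a $\Sigma_1$ bijection $f:\omega_2\to\omega_1^{\omega_1}$ to internalize the second-order quantifiers as ordinal quantifiers.
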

\begin{cproof}
Since $A$ is $\Pi^1_2$, we can find a Borel set $B$ so that  $X$ is in $A$ if and only if $$\forall Y\; \exists Z\; (X,Y,Z) \in B.$$ 
 Let $f:\omega_2 \to \omg^\omg$ be 
a bijection which is $\Sigma_1$ over $L_{\omega_2}$ and choose a 
$\Sigma_1$ formula $\psi$ with parameter $P$ in $\omg^\omg$ so that $(X,Y,Z)\in B$ if and only if  $L_{\oo_2}\models \psi(X,Y,Z,P)$. 

Then
$X\in A$ if and only if 
\begin{equation} \label{eq1}
    L_{\omega_2}\models \forall j\; \exists k\; \psi(X,f(j),f(k),P).
\end{equation}

Let $\phi(X,P,i)$ be the formula 
$$X,P\in L_i \wedge L_i\models \forall j\; \exists k\; \psi(X,f(j),f(k),P).$$ Then (\ref{eq1}) (and so $X\in A$ as well) is equivalent to

$$\sup \{i < \omega_2 : L_{\oo_2}\models \phi(X,P,i)\}=\omega_2,$$ as desired. 
\end{cproof}


Fix some $X\subset \omg^\omg$ with corresponding $\Sigma_1$ formula $\phi$ and parameter $P$.  First, note that $$\{i<\oo_2 : 
L_{\omega_2}\models \phi(X,P,i)\}=\{i<\oo_2 : 
\exists \beta<\oo_2 (L_{\beta}\models \phi(X,P,i))\}.$$
Our plan is to form an $\aleph_1$-tree $T=T_X$ consisting of triples $(\bar \alpha,\bar i,\bar \beta)$ from $\omg$ which resemble a  triple $(\omg,i,\beta)$ from $\oo_2$ with $L_{\beta}\models \phi(X,P,i)$. Distinct uncountable branches in the tree $T$ will correspond to distinct triples $(\omg,i,\beta)$. In turn,  whether $T$ has $\omega_2$ many branches (i.e., if $T$ is Kurepa) will characterize whether $X\in A$. This will prove that $\ktree$ is $\Pi^1_2$-complete.

\medskip

We will say that a triple $(\alpha,i,\beta)$ from $\oo_2$ is \textit{good} (with respect to $X,\phi$ and $P$) if  

\begin{enumerate}
    \item $\alpha<i<\beta<\oo_2$,
    \item\label{it:size} $L_\beta\models \alpha=\omg$,
    \item $\beta$ is the least limit ordinal so that 
    \begin{enumerate}
        \item $X \cap \alpha, P \cap 
\alpha\in L_\beta$,
\item $L_\beta\models |i|=\alpha$,
\item $L_\beta\models \phi(X \cap \alpha,P \cap \alpha,i)$.
    \end{enumerate}
\end{enumerate}

Note that $\alpha\leq \omg$; in case of equality,  $X \cap \alpha=X$ and $P \cap \alpha=P$. If $\alpha<\omg$ then $\beta<\omg$ as well by (\ref{it:size}).
\medskip

The next claim should be clear from the minimality of $\beta$.

\begin{clm}
If $(\alpha,i,\beta)$ is good then the Skolem hull of $\alpha \cup \{\alpha,X \cap \alpha,P\cap \alpha,i\}$ in $L_\beta$ 
is all of $L_\beta$.
\end{clm}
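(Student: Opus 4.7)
The plan is to collapse the Skolem hull and exploit the minimality clause in the definition of goodness. Let $H$ denote the Skolem hull of $\alpha \cup \{\alpha, X\cap\alpha, P\cap\alpha, i\}$ in $L_\beta$. I want to show $H = L_\beta$. The strategy is to identify the Mostowski collapse of $H$ with an $L$-initial segment that witnesses goodness for \emph{the very same} triple $(\alpha, i, \bar\beta)$, and then invoke the minimality of $\beta$.

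Since $H \prec L_\beta$, G\"odel's condensation lemma gives a Mostowski collapse $\pi: H \to L_{\bar\beta}$ for some ordinal $\bar\beta$. Because $H \cap \mathrm{Ord}$ has order type $\bar\beta$ (via $\pi$ restricted to ordinals) and is contained in $\beta$, we get $\bar\beta \le \beta$ for free. Transitivity of $\alpha \subseteq H$ forces $\pi$ to fix every element of $\alpha$, as well as the generators $\alpha$, $X\cap\alpha$, $P\cap\alpha$; and $\bar\beta$ is still a limit ordinal, by elementarity applied to the statement ``I have no largest ordinal''.

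The main obstacle is showing that $\pi$ also fixes $i$, i.e., $\bar i := \pi(i) = i$. To achieve this I will use that $L_\beta \models |i| = \alpha$: let $f$ be the $<_L$-least bijection $\alpha \to i$ inside $L_\beta$, which is definable from $\alpha$ and $i$ alone and therefore belongs to $H$. For each $\xi \in \alpha \subseteq H$, the Skolem function ``value of $f$ at $\xi$'' puts $f(\xi)$ into $H$, so $i = \mathrm{range}(f) \subseteq H$. Hence $\pi$ restricts to the identity on every ordinal below $i$, forcing $\pi(i) = i$.

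With $\pi$ fixing every relevant parameter, the isomorphism $H \cong L_{\bar\beta}$ transfers clauses (a), (b), (c) of the definition of goodness from $L_\beta$ to $L_{\bar\beta}$ without altering $i$, $X\cap\alpha$, or $P\cap\alpha$. Since $\bar\beta$ is a limit ordinal $\le \beta$ satisfying the same defining conditions of $\beta$, minimality of $\beta$ forces $\bar\beta \ge \beta$, so $\bar\beta = \beta$ and $\pi$ is the identity, i.e., $H = L_\beta$.
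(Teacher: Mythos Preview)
Your argument is correct and is exactly the standard condensation-plus-minimality proof that the paper alludes to; the paper itself offers no details beyond the single remark ``this should be clear from the minimality of $\beta$'', and your write-up is precisely how one unpacks that remark. The key step you supply---that $i\subseteq H$ because the $<_L$-least bijection $\alpha\to i$ lies in $H$ and has domain contained in $H$---is the one nontrivial point, and you handle it correctly.
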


\medskip
\newcommand{\tri}{\triangleleft}

Next, we define an ordering on good triples: we write $$(\bar \alpha,\bar i,\bar \beta)\tri (\alpha,i,\beta)$$ if $\bar \alpha<\alpha$ and there is a (unique) elementary embedding $$\psi: L_{\bar \beta}\hookrightarrow L_\beta$$ so that 
\begin{enumerate}
    \item $\psi\uhr \bar \alpha$ is the identity,
    \item $\psi(\bar \alpha)=\alpha$, 
    \item $\psi(X\cap \bar \alpha)=X\cap \alpha$, $\psi(X\cap \bar \alpha)=P\cap \alpha$ and
    \item $\psi(\bar i)=i.$
\end{enumerate}

Note that $L_{\bar \beta}=\psi^{-1}(L_\beta)$ is the transitive closure of $H=\textmd{Hull}(\bar \alpha \cup \{\alpha, X\cap \alpha, P\cap\alpha\})$ in $L_\beta$. In turn, for a given good triple $(\alpha,i,\beta)$ and $\bar \alpha<\alpha$, there is $\bar i,\bar \beta$ so that   $(\bar \alpha,\bar i,\bar \beta)<(\alpha,i,\beta)$ iff for the above hull $H$, $H\cap \alpha = \bar \alpha$.

Let us summarize the basic properties of the relation $\tri$.

\begin{clm}\label{clm:basictri}\begin{enumerate}
    \item The relation $\tri$ is transitive.
    \end{enumerate} Moreover, for any good triple $(\alpha,i,\beta)$,
    \begin{enumerate}
    \setcounter{enumi}{1}
    \item  for any $\bar \alpha<\alpha$, there is at most one choice of $\bar i,\bar \beta$ so that $(\bar \alpha,\bar i,\bar \beta)\tri (\alpha,i,\beta)$;
\item the set $$\{\bar \alpha<\alpha:\exists \bar i,\bar \beta\; (\bar \alpha,\bar i,\bar \beta)\tri (\alpha,i,\beta) \}$$ is closed in $\alpha$.
\end{enumerate}

\end{clm}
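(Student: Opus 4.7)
My plan is to dispatch (1), (2), (3) in turn, each reducing to the Skolem hull generation supplied by the preceding claim (that $L_\beta$ is the hull of $\alpha\cup\{\alpha, X\cap\alpha, P\cap\alpha, i\}$). For (1), transitivity follows by composition: given $(\bar{\bar\alpha},\bar{\bar i},\bar{\bar\beta})\tri(\bar\alpha,\bar i,\bar\beta)$ and $(\bar\alpha,\bar i,\bar\beta)\tri(\alpha,i,\beta)$ witnessed by $\psi_1:L_{\bar{\bar\beta}}\hookrightarrow L_{\bar\beta}$ and $\psi_2:L_{\bar\beta}\hookrightarrow L_\beta$, the composition $\psi:=\psi_2\circ\psi_1$ is elementary; $\psi_1$ fixes $\bar{\bar\alpha}$ pointwise, $\psi_2$ fixes $\bar\alpha\supseteq\bar{\bar\alpha}$ pointwise, so $\psi$ fixes $\bar{\bar\alpha}$; and the three remaining equalities chain trivially from the corresponding equalities for $\psi_1$ and $\psi_2$.

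For (2), I would note that any elementary $\psi:L_{\bar\beta}\hookrightarrow L_\beta$ meeting the four defining conditions is determined by its action on the Skolem generators $\bar\alpha\cup\{\bar\alpha, X\cap\bar\alpha, P\cap\bar\alpha, \bar i\}$ of $L_{\bar\beta}$, so its image must equal $H_{\bar\alpha}:=\mathrm{Hull}^{L_\beta}(\bar\alpha\cup\{\alpha, X\cap\alpha, P\cap\alpha, i\})$, an object depending only on $\bar\alpha$ and the fixed triple. Mostowski-collapsing $H_{\bar\alpha}$ therefore pins down $L_{\bar\beta}$, and $\bar i$ is the preimage of $i$ under the collapse, so $(\bar i,\bar\beta)$ is uniquely determined by $\bar\alpha$.

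For (3), suppose $\bar\alpha<\alpha$ is a limit of points $\bar\alpha_\gamma$ from the set in question, and set $H_\gamma:=\mathrm{Hull}^{L_\beta}(\bar\alpha_\gamma\cup\{\alpha, X\cap\alpha, P\cap\alpha, i\})$. The characterization stated just before the claim gives $H_\gamma\cap\alpha=\bar\alpha_\gamma$. The increasing union $H:=\bigcup_\gamma H_\gamma$ is elementary in $L_\beta$ and contains $\bar\alpha\cup\{\alpha, X\cap\alpha, P\cap\alpha, i\}$, hence equals $H_{\bar\alpha}$, and one computes
\[
H_{\bar\alpha}\cap\alpha=\bigcup_\gamma(H_\gamma\cap\alpha)=\bigcup_\gamma\bar\alpha_\gamma=\bar\alpha,
\]
so by the same characterization a triple $(\bar\alpha,\bar i,\bar\beta)\tri(\alpha,i,\beta)$ exists, obtained by collapsing $H_{\bar\alpha}$.

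The main obstacle lies in (3): the triple produced by collapsing $H_{\bar\alpha}$ must itself be a \emph{good} triple, not merely one admitting an elementary embedding into $L_\beta$. This requires transferring the minimality clause in the definition of goodness -- ``$\bar\beta$ is the least limit ordinal satisfying the required conditions'' -- across the collapse, which should go through because that external minimality statement is equivalent, inside $L_\beta$, to the internal assertion that no $L_\gamma$ with $\gamma<\beta$ meets the corresponding conditions; such an internal statement then transfers by elementarity to $L_{\bar\beta}$.
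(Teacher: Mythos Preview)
Your argument is correct. The paper states this claim without proof, treating it as routine; the remark immediately preceding the claim (that $L_{\bar\beta}$ is the transitive collapse of $\mathrm{Hull}^{L_\beta}(\bar\alpha\cup\{\alpha,X\cap\alpha,P\cap\alpha,i\})$, and that a $\tri$-predecessor at $\bar\alpha$ exists iff this hull meets $\alpha$ in exactly $\bar\alpha$) is essentially the content of your proof of (2) and the key input for (3). Your handling of the goodness of the collapsed triple in (3), in particular the internalization of the minimality of $\bar\beta$, is the right way to fill in that detail.
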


\begin{clm}
The relation $\tri$ is a tree order on good triples. 

\end{clm}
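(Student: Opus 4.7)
The plan is to verify the two conditions for being a tree order: strict partial order (irreflexive and transitive) plus linearity of predecessors. Transitivity is already in Claim \ref{clm:basictri}(1), and irreflexivity is automatic because the definition of $(\bar\alpha,\bar i,\bar\beta)\tri(\alpha,i,\beta)$ requires $\bar\alpha<\alpha$, which also gives antisymmetry. So the real content is showing that the $\tri$-predecessors of any good triple $(\alpha,i,\beta)$ are linearly ordered.

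To this end, I would fix $(\alpha,i,\beta)$ and take two predecessors $(\bar\alpha_1,\bar i_1,\bar\beta_1)\tri(\alpha,i,\beta)$ and $(\bar\alpha_2,\bar i_2,\bar\beta_2)\tri(\alpha,i,\beta)$ with $\bar\alpha_1<\bar\alpha_2<\alpha$, witnessed by elementary embeddings $\psi_1:L_{\bar\beta_1}\hookrightarrow L_\beta$ and $\psi_2:L_{\bar\beta_2}\hookrightarrow L_\beta$. The aim is to produce an embedding $\psi:L_{\bar\beta_1}\hookrightarrow L_{\bar\beta_2}$ witnessing $(\bar\alpha_1,\bar i_1,\bar\beta_1)\tri(\bar\alpha_2,\bar i_2,\bar\beta_2)$. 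The natural candidate is $\psi=\psi_2^{-1}\circ\psi_1$, where $\psi_2^{-1}$ is read as the inverse of $\psi_2$ on its range.

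For this to make sense I need $\mathrm{ran}(\psi_1)\subset \mathrm{ran}(\psi_2)$. But $\mathrm{ran}(\psi_1)$ is the Skolem hull of $\bar\alpha_1\cup\{\alpha,X\cap\alpha,P\cap\alpha\}$ inside $L_\beta$ (using the remark after the definition of $\tri$, together with the minimality from the Claim about hulls), and $\mathrm{ran}(\psi_2)$ is the corresponding hull with $\bar\alpha_2$ in place of $\bar\alpha_1$. Since $\bar\alpha_1<\bar\alpha_2$, the inclusion $\mathrm{ran}(\psi_1)\subset\mathrm{ran}(\psi_2)$ is immediate. Then $\psi=\psi_2^{-1}\circ\psi_1$ is an elementary embedding from $L_{\bar\beta_1}$ to $L_{\bar\beta_2}$, and a direct check shows that it is the identity on $\bar\alpha_1$ and sends $\bar\alpha_1,X\cap\bar\alpha_1,P\cap\bar\alpha_1,\bar i_1$ respectively to $\bar\alpha_2,X\cap\bar\alpha_2,P\cap\bar\alpha_2,\bar i_2$. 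Thus $(\bar\alpha_1,\bar i_1,\bar\beta_1)\tri(\bar\alpha_2,\bar i_2,\bar\beta_2)$, so predecessors are linearly (hence, being indexed by ordinals $\bar\alpha<\alpha$ via Claim \ref{clm:basictri}(2), well-) ordered.

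The main obstacle I anticipate is the bookkeeping around hulls: verifying cleanly that the range of $\psi_1$ sits inside the range of $\psi_2$, and that the composition $\psi_2^{-1}\circ\psi_1$ really sends the designated parameters to the designated parameters (rather than to some other $\bar i',\bar \beta'$), using the uniqueness from Claim \ref{clm:basictri}(2) to nail things down. Once that is in place the tree-ordering property is essentially a diagram chase.
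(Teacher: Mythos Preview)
Your proposal is correct and follows essentially the same route as the paper: both arguments compare two $\tri$-predecessors of a fixed good triple by composing one witnessing embedding with the inverse of the other, $\psi_2^{-1}\circ\psi_1$, to obtain the required embedding between the smaller structures. You supply a bit more detail than the paper does (the hull argument for $\mathrm{ran}(\psi_1)\subset\mathrm{ran}(\psi_2)$), but the idea and the diagram chase are the same.
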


\begin{tproof}
Suppose that we are given good triples $(\alpha_k,i_k,\beta_k)$ for $k<3$ so that $$(\alpha_0,i_0,\beta_0),(\alpha_1,i_1,\beta_1)\tri(\alpha_2,i_2,\beta_2).$$

If $\alpha_0=\alpha_1$ then $(\alpha_0,i_0,\beta_0)=(\alpha_1,i_1,\beta_1)$; indeed, this follows from the Claim \ref{clm:basictri}.
So we can assume $\alpha_0<\alpha_1$. Now, if  $\psi_j:L_{\beta_j}\hookrightarrow L_{\beta_2}$ witnesses that $(\alpha_j,i_j,\beta_j)\tri(\alpha_2,i_2,\beta_2)$ for $j=0,1$ then $\psi=\psi_1^{-1}\circ \psi_0$ witnesses $(\alpha_0,i_0,\beta_0)\tri (\alpha_1,i_1,\beta_1)$. 

\end{tproof}

For some technical reasons, instead of taking the tree of good triples, we will look at functions associated to good triples and the tree formed by them. For each good triple $(\alpha,i,\beta)$, define a function $f_{(\alpha,i,\beta)}$ with domain $\alpha+1$ as follows:

\[
f_{(\alpha,i,\beta)}(\bar \alpha)=
\begin{cases}
(\alpha,i,\beta) & \textmd{ if } \bar \alpha=\alpha,\\
(\bar \alpha,\bar i,\bar \beta) & \textmd{ if } (\bar \alpha,\bar i,\bar \beta)\tri (\alpha,i,\beta)  \textmd{ for some } \bar i,\bar \beta, \textmd{ and}\\
0 & \textmd{ otherwise.}

\end{cases}
\]
This is well-defined by Claim \ref{clm:basictri}.

\begin{clm}\label{clm:restr}
For any $(\bar \alpha,\bar i,\bar \beta)\tri (\alpha,i,\beta)$, $f_{(\alpha,i,\beta)}\uhr \bar \alpha+1=f_{(\bar \alpha,\bar i,\bar \beta)}$.
\end{clm}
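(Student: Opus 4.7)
The plan is to compare $f_{(\alpha,i,\beta)}\uhr(\bar\alpha+1)$ with $f_{(\bar\alpha,\bar i,\bar\beta)}$ pointwise on the domain $\bar\alpha+1$. At the top value $\bar\alpha$, the definition of $f$ gives $f_{(\bar\alpha,\bar i,\bar\beta)}(\bar\alpha)=(\bar\alpha,\bar i,\bar\beta)$ from its first clause, whereas $f_{(\alpha,i,\beta)}(\bar\alpha)=(\bar\alpha,\bar i,\bar\beta)$ by the second clause together with the hypothesis $(\bar\alpha,\bar i,\bar\beta)\tri(\alpha,i,\beta)$ and the uniqueness of predecessors at a given level (Claim \ref{clm:basictri}(2)). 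For each $\gamma<\bar\alpha$, both functions return either $0$ or a triple whose first coordinate is $\gamma$, so it suffices to establish the equivalence
$$
\exists j,\delta\; (\gamma,j,\delta)\tri(\alpha,i,\beta) \iff \exists j,\delta\; (\gamma,j,\delta)\tri(\bar\alpha,\bar i,\bar\beta)
$$
and to observe that whenever both sides hold the witnessing triple is the same (which again comes from Claim \ref{clm:basictri}(2), this time applied to $(\bar\alpha,\bar i,\bar\beta)$).

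The backward direction of the equivalence is immediate from transitivity of $\tri$ (Claim \ref{clm:basictri}(1)). For the forward direction my plan is to factor the two witnessing embeddings through one another. Suppose $\psi_0: L_\delta\hookrightarrow L_\beta$ witnesses $(\gamma,j,\delta)\tri(\alpha,i,\beta)$ and $\psi_1: L_{\bar\beta}\hookrightarrow L_\beta$ witnesses $(\bar\alpha,\bar i,\bar\beta)\tri(\alpha,i,\beta)$. By the remark following the definition of $\tri$, the range of $\psi_0$ is the Skolem hull $\textmd{Hull}^{L_\beta}(\gamma\cup\{\alpha,X\cap\alpha,P\cap\alpha,i\})$ and the range of $\psi_1$ is $\textmd{Hull}^{L_\beta}(\bar\alpha\cup\{\alpha,X\cap\alpha,P\cap\alpha,i\})$. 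Since $\gamma<\bar\alpha$, the former hull sits inside the latter, so $\tilde\psi:=\psi_1^{-1}\circ\psi_0:L_\delta\hookrightarrow L_{\bar\beta}$ is a well-defined elementary embedding. A direct check then verifies $\tilde\psi\uhr\gamma=\textmd{id}$, $\tilde\psi(\gamma)=\psi_1^{-1}(\alpha)=\bar\alpha$, $\tilde\psi(X\cap\gamma)=X\cap\bar\alpha$, $\tilde\psi(P\cap\gamma)=P\cap\bar\alpha$ and $\tilde\psi(j)=\psi_1^{-1}(i)=\bar i$, which are exactly the four clauses required to witness $(\gamma,j,\delta)\tri(\bar\alpha,\bar i,\bar\beta)$.

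The only real obstacle is this factorization: it relies on a clean identification of the ranges of $\psi_0$ and $\psi_1$ with the advertised Skolem hulls in $L_\beta$, which is precisely the content of the remark following the definition of $\tri$. Once the factorization is in place, the verification of the four defining clauses of $\tri$ for $\tilde\psi$ is routine, the uniqueness clauses of Claim \ref{clm:basictri} guarantee that the triples recovered on the two sides of the equivalence coincide, and the pointwise comparison yields $f_{(\alpha,i,\beta)}\uhr(\bar\alpha+1)=f_{(\bar\alpha,\bar i,\bar\beta)}$.
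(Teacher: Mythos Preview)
Your argument is correct, but it duplicates work already done in the paper. The forward direction of your equivalence --- factoring $\psi_1^{-1}\circ\psi_0$ to witness $(\gamma,j,\delta)\tri(\bar\alpha,\bar i,\bar\beta)$ from the two embeddings into $L_\beta$ --- is exactly the argument the paper gives for the claim that $\tri$ is a tree order. The paper's one-line proof of Claim~\ref{clm:restr} simply cites that claim: if $(\gamma,j,\delta)\tri(\alpha,i,\beta)$ and $(\bar\alpha,\bar i,\bar\beta)\tri(\alpha,i,\beta)$ with $\gamma<\bar\alpha$, then $(\gamma,j,\delta)$ and $(\bar\alpha,\bar i,\bar\beta)$ are two predecessors of a common node and hence comparable, forcing $(\gamma,j,\delta)\tri(\bar\alpha,\bar i,\bar\beta)$. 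Together with transitivity for the other direction and the uniqueness in Claim~\ref{clm:basictri}(2), the pointwise comparison goes through exactly as you wrote. So your proof is fine; it just inlines the tree-order argument rather than invoking it as a black box.
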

\begin{cproof}
This follows immediately from the fact that $\tri$ is a tree order.
\end{cproof}

We let $T_X$ be the set of function $f_{(\alpha,i,\beta)}\uhr \bar \alpha+1$ where $(\alpha,i,\beta)$ is a good triple (with respect to $X,\phi,P$) of countable ordinals and $\bar \alpha \leq \alpha$.

\begin{clm}
$T_X$ is a tree of height at most $\omg$ and countable levels.
\end{clm}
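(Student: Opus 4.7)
The plan is to verify tree-ness, height bound, and countable levels in turn, with the real work being an induction for the last point.

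First I set the tree order on $T_X$ to be function extension. For any node $f = f_{(\alpha,i,\beta)} \uhr (\bar\alpha+1)$, its proper predecessors are the functions $f_{(\alpha,i,\beta)} \uhr (\bar\alpha'+1)$ for $\bar\alpha' < \bar\alpha$; all of these lie in $T_X$, and by Claim \ref{clm:restr} they are intrinsic to $f$ (independent of the representing triple). Hence the predecessor set is well-ordered of order type $\bar\alpha < \omega_1$, which puts $f$ at level $\bar\alpha$ and gives height at most $\omega_1$.

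For the level count I induct on $\delta < \omega_1$, showing that level $\delta$ is countable. The base case is immediate. For the successor step, any node at level $\gamma+1$ is obtained from its unique level-$\gamma$ predecessor by adjoining a value at $\gamma+1$, which by the definition of $f_{(\alpha,i,\beta)}$ is either $0$ or a good triple $(\gamma+1,\bar i,\bar\beta)$; all coordinates of such a triple are countable, so only countably many choices arise, and countability of level $\gamma$ yields countability of level $\gamma+1$.

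The limit step is the delicate point. Fix a limit $\delta < \omega_1$ and a node $f = f_{(\alpha,i,\beta)} \uhr (\delta+1)$ at level $\delta$; I split on the value $f(\delta)$. If $f(\delta) = (\delta,\bar i,\bar\beta)$, then either $\delta=\alpha$, or $(\delta,\bar i,\bar\beta)\tri(\alpha,i,\beta)$ and Claim \ref{clm:restr} gives $f = f_{(\delta,\bar i,\bar\beta)}$; either way these nodes are in bijection with the good triples whose first coordinate is $\delta$, of which there are only countably many. If $f(\delta) = 0$, the closedness assertion Claim \ref{clm:basictri}(3) forces the set $\{\bar\alpha'<\delta: f(\bar\alpha') \neq 0\}$ to be bounded below $\delta$ by some $\gamma < \delta$, so $f$ is determined by $f\uhr(\gamma+1)$ together with the value $0$ on $(\gamma,\delta]$. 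Summing over countably many choices of $\gamma$ and the inductively countable levels below $\delta$ yields $\aleph_0\cdot\aleph_0=\aleph_0$ such nodes, so level $\delta$ is countable.

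I expect the limit-step count for nodes with $f(\delta) = 0$ to be the main obstacle: a priori, many distinct cofinal branches below $\delta$ could carry $0$ at $\delta$ and blow up the level count past $\aleph_0$. Claim \ref{clm:basictri}(3) is precisely what rules this out, forcing any such branch to stabilize at $0$ from some $\gamma<\delta$ onward and thereby collapsing the count to one over a lower level, where the inductive hypothesis applies.
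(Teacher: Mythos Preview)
Your argument follows the paper's proof closely: the same induction on levels, the same case split on whether $f(\bar\alpha)$ is $0$ or a good triple, the same appeal to closedness (Claim \ref{clm:basictri}(3)) to force $f$ to be eventually $0$ in the zero case, and the same use of Claim \ref{clm:restr} to reduce the nonzero case to counting good triples with a fixed first coordinate.

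One step needs tightening. In your successor case you write that a good triple $(\gamma+1,\bar i,\bar\beta)$ has ``all coordinates countable, so only countably many choices arise,'' and in the limit case you simply assert that there are only countably many good triples with first coordinate $\delta$. That the coordinates are countable ordinals gives at best an $\omega_1$ bound. The actual reason, which the paper supplies, is that condition (\ref{it:size}) in the definition of a good triple requires $L_{\bar\beta}\models \bar\alpha=\omega_1$, whereas for all sufficiently large $\gamma<\omega_1$ one has $L_\gamma\models |\bar\alpha|\le\aleph_0$; hence $\bar\beta$ is bounded below some fixed countable ordinal, and then $\bar i<\bar\beta$ gives only countably many triples. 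With this correction your proof is complete and essentially identical to the paper's.
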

\begin{cproof}We prove that every level of $T_X$ is countable by induction.
Elements of $T_X$ at level $\bar \alpha$ are of the form $f=f_{(\alpha,i,\beta)}\uhr \bar \alpha+1$.
These functions either satisfy (i) $f(\bar \alpha)=0$ or (ii) $f(\bar \alpha)=(\bar \alpha,\bar i, \bar \beta)$.
In case (i), $f$ must be constant 0 on an end-segment of $\bar \alpha$ and so $f$ is completely determined by the previous levels so by induction, there are only countably many choices for $f$. In case (ii), we note that $f=f_{(\bar \alpha,\bar i, \bar \beta)}$ by Claim \ref{clm:restr}. Finally, note that there are only countably many possibilites for the value of $\bar \beta$ since for any large enough $\gamma<\omg$, $L_\gamma\models |\alpha|\leq \aleph_0.$ This proves that level $\bar \alpha$ of $T_X$ must be countable.

\end{cproof}

The next lemma will conlcude the proof of the theorem.

\begin{clm}
$T_X$ has $\aleph_2$ uncountable branches if and only if the set  $$\sup \{i<\oo_2 : 
L_{\omega_2}[X]\models \phi(X,P,i)\}=\oo_2.$$
\end{clm}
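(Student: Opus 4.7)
The plan is to reduce both implications to a bijection between uncountable branches of $T_X$ and good triples at level $\omega_1$. Note first that, by minimality of $\beta$ in the definition of a good triple, for each $i\in(\omega_1,\omega_2)$ there is at most one good triple $(\omega_1,i,\beta(i))$, and $\Sigma_1$-absoluteness between $L_{\omega_2}$ and $L_{\beta(i)}$ guarantees that $i\in S:=\{i<\omega_2:L_{\omega_2}[X]\models\phi(X,P,i)\}$ holds exactly when such a good triple exists. So, by regularity of $\omega_2$, $\sup S=\omega_2$ is equivalent to the existence of $\aleph_2$ good triples at level $\omega_1$, and the claim reduces to establishing a bijection between these triples and the uncountable branches of $T_X$ (up to an error set of size $\leq \aleph_1$).

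For the $(\Leftarrow)$ direction, to each good triple $(\omega_1,i,\beta)$ I associate the function $b_{(\omega_1,i,\beta)}\colon \omega_1\to V$ defined in direct analogy with $f_{(\alpha,i,\beta)}$: set $b(\bar\alpha)=(\bar\alpha,\bar i,\bar\beta)$ whenever $(\bar\alpha,\bar i,\bar\beta)\triangleleft (\omega_1,i,\beta)$ (this value is unique by Claim \ref{clm:basictri}(2)) and $b(\bar\alpha)=0$ otherwise. Using that the set of such $\bar\alpha<\omega_1$ is closed unbounded (Claim \ref{clm:basictri}(3)) together with transitivity of $\triangleleft$, I will verify that every initial segment $b\upharpoonright \bar\alpha+1$ agrees with $f_{(\alpha_0,i_0,\beta_0)}\upharpoonright \bar\alpha+1$ for some countable $(\alpha_0,i_0,\beta_0)\triangleleft(\omega_1,i,\beta)$, placing $b$ as an uncountable branch of $T_X$. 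Distinct triples yield distinct branches because taking the direct limit of the chain of countable $L_{\bar\beta}$'s witnessed by $b$ recovers $(\omega_1,i,\beta)$.

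For the $(\Rightarrow)$ direction, given an uncountable branch $b$ of $T_X$ set $E_b=\{\bar\alpha<\omega_1:b(\bar\alpha)\neq 0\}$. If $E_b$ is bounded then $b$ is essentially determined by its restriction to a countable ordinal, so there are at most $\aleph_1$ such \emph{trivial} branches; hence, under the hypothesis of $\aleph_2$ uncountable branches, at least $\aleph_2$ are \emph{essential}, i.e.\ have $E_b$ unbounded. For an essential $b$, the sequence $(b(\bar\alpha))_{\bar\alpha\in E_b}$ is a $\triangleleft$-chain of countable good triples, and I will take the direct limit of the associated $L_{\bar\beta_{\bar\alpha}}$'s equipped with their distinguished parameters $(\bar\alpha,X\cap\bar\alpha,P\cap\bar\alpha,\bar i_{\bar\alpha})$ to obtain a structure of size $\aleph_1$ that, by Condensation, is $L_\beta$ for some $\beta<\omega_2$, with limiting parameters $(\omega_1,X,P,i)$. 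Distinct essential branches produce distinct triples, since $b$ is recovered from $(\omega_1,i,\beta)$ by reading off the $\triangleleft$-collapses level by level, so this yields an injection from essential branches into $S$ and therefore $\sup S=\omega_2$.

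The main obstacle will be the direct limit step: one must check not only that the limit is an initial segment $L_\beta$ of the constructible hierarchy, but also that $(\omega_1,i,\beta)$ is genuinely a \emph{good} triple. The second point demands minimality of $\beta$, which I will derive from the fact that each factor $L_{\bar\beta_{\bar\alpha}}$ already equals the Skolem hull of its parameters (by the post-Claim remark just after the definition of good triple) and that this hull property passes to direct limits, combined with $\Sigma_1$-upward absoluteness of $\phi$ to see $L_\beta\models\phi(X,P,i)$.
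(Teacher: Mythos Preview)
Your proposal is correct and follows essentially the same approach as the paper: associate to each good triple $(\omega_1,i,\beta)$ the branch given by $f_{(\omega_1,i,\beta)}$, and conversely recover the triple from a non-eventually-zero branch by taking the direct limit of the $(L_{\bar\beta},X\cap\bar\alpha,P\cap\bar\alpha,\bar i)$'s. Your outline is in fact more detailed than the paper's terse argument (which leaves its subclaims unproved); one small caveat is that Claim~\ref{clm:basictri}(3) only gives \emph{closedness} of $\{\bar\alpha:\exists\bar i,\bar\beta\ (\bar\alpha,\bar i,\bar\beta)\triangleleft(\omega_1,i,\beta)\}$, and the unboundedness you need comes from a separate L\"owenheim--Skolem chain argument.
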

\begin{tproof}
First, assume that $L_{\omega_2}\models \phi(X,P, i)$ for some $i>\omg$. Pick the minimal $\beta$ so that  $L_{\beta}\models \phi(X,P,i)$ and so $(\omg,i,\beta)$ is a good triple with respect to $X,\phi,P$.

\begin{subclaim}
$B=\{f_{(\omg,i,\beta)}\uhr \alpha+1:\alpha<\omg\}$ is an uncountable branch in $T_X$.
\end{subclaim}

Moreover, the branch $B$ uniquely determines the triple $(\omg,i,\beta)$ by the following claim.

\begin{subclaim}
$(L_\beta,X,P,i)$ is the direct limit of $(L_{\bar \beta},X\cap \alpha, P\cap \alpha,\bar i)$ for $(\bar \alpha,\bar i,\bar \beta)\tri (\omg,i,\beta)$.
\end{subclaim}

 Thus distinct good triples $(\omg,i,\beta)$ correspond to different branches in $T_X$.
\medskip

Conversely, suppose that we have a branch $B$ in $T_X$, which is not eventually 0. Now, the
direct limit of the 
$(L_{\bar \beta},X \cap \bar \alpha,P \cap \bar \alpha,\bar i)$ for $(\bar \alpha,\bar i,\bar \beta)\in \ran \cup B$  yields some $(L_\beta,X,P,i)$  and a good triple $(\omega_1,i,\beta)$. Moreover, $B$ is the restriction of $f_{(\omega_1,i,\beta)}$. 
Distinct $\omega_1$-branches yield distinct triples $(\omega_1,i,\beta)$
and since $\beta$ is uniquely determined by $i$, we get 
$\omega_2$-many $i$ such that $\phi(X,P,i)$ holds in $L_{\omega_2}$.


\end{tproof}

\section{Open problems and future goals}\label{sec:questions}

A positive answer to the following question would show that there are no ZFC reductions between stationarity and $\atree$.

\begin{quest}
Is it consistent with CH that all Aronszajn trees are special and there are no Canary trees?
\end{quest}

Regarding Kurepa-trees, the following remains open.

\begin{quest}
Can 
$\ktree$ be $\Delta_2^1$ and nonempty?
\end{quest}

\begin{quest}
What is the complexity of $\ktree$ under $MA_{\aleph_1}$ (given such trees exist)?
\end{quest}

The following would also be very interesting.

\begin{quest}
Find a \textit{natural} class of structures $\mc X$ in $\siii\setm \diii$ or $\piii\setm \diii$ which is \emph{not} complete for its complexity class.
\end{quest}

The reason we ask for a natural class is that under $V=L$, one can build such \textit{artificial} examples (and for inaccessible cardinals there are even natural examples) but we wonder if there are more combinatorial examples on $\omg$. Also, Harrington proved that consistently no such intermediate classes exist.

\medskip
Yet another axiom to consider in more detail is $PFA(S)$ for coherent Suslin-trees $S$ (see e.g., \cite{tall2017pfa}). Such models allow the existence of Suslin-trees while share many properties with models of the proper forcing axiom.

\begin{quest}
How does $PFA(S)$ affect the complexity of the classes $\atree, \satree$ and $\stree$?
\end{quest}

Once we force with the Suslin tree $S$ over a model of $PFA(S)$, the resulting extension has no Suslin trees any more and in fact, any two Aronszajn trees will be club-isomorphic \cite{yorioka2017club}. In turn, the complexity of $\atree$ and $\satree$ is $\Delta^1_1$ as mentioned in Figure \ref{fig:table}.

It would certainly be interesting to see to what extent our results generalise to higher cardinals above $\omg$. In particular, we mention a recent result of  Krueger \cite{krueger2018club} on the club-isomorphism of higher Aronszajn trees that could substitute the Abraham-Shelah model. The construction schemes developed by Brodsky, Lambie-Hanson and Rinot \cite{brodsky2017microscopic, rinot2017higher, lambie2017aronszajn} for higher Suslin and Aronszajn trees also seems rather relevant. The theorem of Jensen on CH and all Aronszajn trees being special was recently generalized to higher cardinals in a breakthrough result by Asper\'o and Golshani \cite{aspero2018special}. Definability of $\textmd{NS}_\kappa$ on successor cardinals was investigated by Friedman, Wu and Zdomskyy \cite{friedman2015delta}.

\medskip

\medskip

We believe that a similar analysis of  other classes of structures on $\omega_1$ is well worth exploring. To name the most natural candidates, we would be interested in the following: 
\begin{enumerate}
    \item \emph{Graphs and more generally colourings} $c:[\omg]^2\to r$ with $r\leq \omg$. One might look at graphs with chromatic or colouring number $\omg$; or strong colourings that witness the failure of square bracket relations (i.e., $\omg \not\to [\omg]^2_\omg$). Hypergraphs and various set-systems are also natural candidates.
    
    \item \textit{Ladder systems on $\omg$}. Natural classes are ladder systems with various guessing properties (i.e., $\clubsuit$ sequences or club guessing sequences); and ladder systems with or without the uniformization property.
    
    \item \textit{Linear orders on $\omg$}. For Aronszajn and Suslin-lines, our analysis most likely yields the appropriate  complexities but we did not address the important class of \emph{Countryman lines}.
    
    \item \textit{Forcing notions}. We can consider various classes of forcing posets on $\omg$ such as ccc, Knaster, $\sigma$-centered, $\sigma$-linked or proper partial orders. 

\end{enumerate}

Finally, it will be very natural to consider the classical \textit{equivalence relations} on these classes and to find Borel definable reductions between them. To mention a few, we name the order isomorphism of trees and linear orders;  the notion of club-isomorphism between trees;  graph isomorphism; bi-embeddability of various structures; or forcing equivalence of posets.

%

\bibliographystyle{plain}
\bibliography{thesis}

\begin{thebibliography}{10}

\bibitem{abrahamiso}
Uri Abraham and Saharon Shelah.
\newblock Isomorphism types of {A}ronszajn trees.
\newblock {\em Israel Journal of Mathematics}, 50(1):75--113, 1985.

\bibitem{aspero2018special}
David Asper{\'o} and Mohammad Golshani.
\newblock The special {A}ronszajn tree property at $\aleph_2 $ and {GCH}.
\newblock {\em arXiv preprint arXiv:1809.07638}, 2018.

\bibitem{baumgartner1970embedding}
James Baumgartner, Jerome Malitz, and William Reinhardt.
\newblock Embedding trees in the rationals.
\newblock {\em Proceedings of the National Academy of Sciences},
  67(4):1748--1753, 1970.

\bibitem{brodsky2017microscopic}
Ari~Meir Brodsky and Assaf Rinot.
\newblock A microscopic approach to souslin-tree constructions, part i.
\newblock {\em Annals of Pure and Applied Logic}, 168(11):1949--2007, 2017.

\bibitem{devlin2006souslin}
Keith~J Devlin and Havard Johnsbraten.
\newblock {\em The {S}ouslin problem}, volume 405.
\newblock Springer, 2006.

\bibitem{mirna}
M.~Dzamonja and J.~V{\"a}{\"a}n{\"a}nen.
\newblock A family of trees with no uncountable branches.
\newblock {\em Topology Proc.}, 28(1):113--132, 2004.
\newblock Spring Topology and Dynamical Systems Conference.

\bibitem{fokina2013classes}
Ekaterina Fokina, Sy-David Friedman, Julia Knight, and Russell Miller.
\newblock Classes of structures with universe a subset of $\omega_1$.
\newblock {\em Journal of Logic and Computation}, 23(6):1249--1265, 2013.

\bibitem{friedman2014generalized}
Sy-David Friedman, Tapani Hyttinen, and Vadim Kulikov.
\newblock {\em Generalized descriptive set theory and classification theory},
  volume 230.
\newblock American Mathematical Society, 2014.

\bibitem{friedman2015delta}
Sy-David Friedman, Liuzhen Wu, and Lyubomyr Zdomskyy.
\newblock $\delta_1$-definability of the non-stationary ideal at successor
  cardinals.
\newblock {\em Fundamenta Mathematicae}, 229:231--254, 2015.

\bibitem{hyttinen2001canary}
Tapani Hyttinen and Mika Rautila.
\newblock The canary tree revisited.
\newblock {\em The Journal of Symbolic Logic}, 66(4):1677--1694, 2001.

\bibitem{jech2003set}
T~Jech~3rd.
\newblock Set theory, 3rd millennium ed, rev. and expanded ed, 2003.

\bibitem{kechris1987descriptive}
Alexander~S Kechris and Alain Louveau.
\newblock {\em Descriptive set theory and the structure of sets of uniqueness},
  volume 128.
\newblock Cambridge University Press, 1987.

\bibitem{krueger2018club}
John Krueger.
\newblock Club isomorphisms on higher {A}ronszajn trees.
\newblock {\em Annals of Pure and Applied Logic}, 169(10):1044--1081, 2018.

\bibitem{kunen}
K.~Kunen.
\newblock {\em Set theory an introduction to independence proofs}.
\newblock Elsevier, 2014.

\bibitem{lambie2017aronszajn}
Chris Lambie-Hanson.
\newblock Aronszajn trees, square principles, and stationary reflection.
\newblock {\em Mathematical Logic Quarterly}, 63(3-4):265--281, 2017.

\bibitem{vaananentrees}
Alan Mekler and Jouko V\"a\"an\"anen.
\newblock Trees and {$\Pi^1_1$}-subsets of {$^{\omega_1}\omega_1$}.
\newblock {\em J. Symbolic Logic}, 58(3):1052--1070, 1993.

\bibitem{mekler1993canary}
Alan~H Mekler and Saharon Shelah.
\newblock The canary tree.
\newblock {\em Canadian mathematical bulletin}, 36(2):209--215, 1993.

\bibitem{rinot2017higher}
Assaf Rinot.
\newblock Higher {S}ouslin trees and the gch, revisited.
\newblock {\em Advances in Mathematics}, 311:510--531, 2017.

\bibitem{shelah2017proper}
Saharon Shelah.
\newblock {\em Proper and improper forcing}, volume~5.
\newblock Cambridge University Press, 2017.

\bibitem{shelah2000stationary}
Saharon Shelah and Jouko V{\"a}{\"a}n{\"a}nen.
\newblock Stationary sets and infinitary logic.
\newblock {\em The Journal of Symbolic Logic}, 65(3):1311--1320, 2000.

\bibitem{silver1971independence}
Jack Silver.
\newblock The independence of kurepa’s conjecture and two-cardinal
  conjectures in model theory.
\newblock In {\em Axiomatic Set Theory}, volume~13, pages 383--390. Amer. Math.
  Soc Providence, RI, 1971.

\bibitem{tall2017pfa}
Franklin~D Tall.
\newblock {PFA(S)[S]} for the masses.
\newblock {\em Topology and its Applications}, 232:13--21, 2017.

\bibitem{stevotrees}
S.~Todorcevic.
\newblock Trees and linearly ordered sets.
\newblock {\em Handbook of set-theoretic topology}, pages 235--293, 1984.

\bibitem{vaananen1995games}
Jouko V{\"a}{\"a}n{\"a}nen.
\newblock Games and trees in infinitary logic: A survey.
\newblock In {\em Quantifiers: Logics, Models and Computation}, pages 105--138.
  Springer, 1995.

\bibitem{yorioka2017club}
Teruyuki Yorioka.
\newblock Club-isomorphisms of {A}ronszajn trees in the extension with a suslin
  tree.
\newblock 2017.

\end{thebibliography}

\end{document}